\newcommand{\X}{\mathcal{X}_T^s}
\newcommand{\Y}{\mathcal{X}_T^s}
\newcommand{\lank}{\langle \xi \rangle}
\newcommand{\Ph}{\Phi_\mu (\xi)}
\newcommand{\ha}{\hat{\phi}}
\newcommand{\x}{(1+\xi^2)}
\newcommand{\les}{\lesssim}
\newcommand{\R}{\mathbb R}
\newcommand{\p}{\partial}
\newtheorem{theorem}{Theorem}[section]
\newtheorem{proposition}[theorem]{Proposition}
\newtheorem{remark}[theorem]{Remark}
\newtheorem{lemma}[theorem]{Lemma}
\begin{document}
\vglue-1cm \hskip1cm
\title[The Kuramoto-Sivashinsky equation]{The IVP for the Kuramoto-Sivashinsky equation in low regularity Sobolev spaces}



\author[A. Cunha]{Alysson Cunha}
\address{Universidade Federal de Goi\'as, Instituto de Matem\'atica e Estat\'istica.
Universidade Federal de Goi\'as - UFG - Campus II, Goi\^ania, Bra\-zil.}
\email{alysson@ufg.br}

\author[E. Alarcon]{Eduardo Alarcon}
\address{Universidade Federal de Goi\'as, Instituto de Matem\'atica e Estat\'istica.
Universidade Federal de Goi\'as - UFG - Campus II, Goi\^ania, Bra\-zil.}
\email{arbietoalarcon@ufg.br}



\begin{abstract}

In this work, we study the initial-value problem associated with the Kuramoto-Sivashinsky equation. We show that the associated initial value problem is locally and globally well-posed in Sobolev spaces $H^s(\mathbb{R})$, where $s>1/2$. We also show that our result is sharp, in the sense that the flow-map data-solution is not $C^2$ at origin, for $s<1/2$. 
Furthermore, we study the behavior of the solutions when $\mu\downarrow 0$. 
\end{abstract}

\maketitle

\section{Introduction}\label{introduction}

This paper is concerned with the initial-value problem (IVP) for the Kuramoto-Sivashinsky equation (KS)

\begin{equation}\label{ks}
\begin{cases}
u_{t}-\partial_{x}^2 u-\mu (1-\p_x^2)^{-1/2}u-\frac 12 (\p_x u)^2=0, \;\;x\in\R, \;t\geq 0, \\
u(x,0)=\phi(x),
\end{cases}
\end{equation}

where $\mu>0$ is a constant and $u$ is a real-valued function. 

First we present a derivation of the equation \eqref{ks}. Indeed, an initial value problem equivalent to (\ref{ks})

\begin{equation}\label{ksd}
 \begin{cases}
\partial_t {H }-\mathcal{D}_C\,\partial_{x}^2 H -\frac 12 (\p_x H)^2 - \dfrac{\delta\,G}{8 \pi}\displaystyle \int_{-\infty}^{+\infty}\, \int_{-\infty}^{+\infty} e^{i\,k (x - x^\prime)} \dfrac{H(x^\prime,\,t)}{\sqrt{\dfrac{1}{4} + k^2}} \,dx^\prime\,dk =0, \\
u(x,0)=\phi(x),
\end{cases}
\end{equation}
where $\mathcal{D}_C$ is dimensionless catalyst diffusivity, $\delta$ is relative density and $G$ is dimensionless acceleration of gravity, was derived by G. I. Sivashinsky, et al (\cite{Siv}), to describe vertical propagation of chemical waves fronts in the presence instability due to density gradients (possibly thermally induced). Assuming an interaction region thin enough to be described as a surface ($ z = H(x,\,y,\,t)$), where $H$  is the vertical position of the front, they use thermo-hydrodynamic equations in the regions of reacted fluid ( $ z < H(x,\,y,\,t)$) and unreacted fluid ( $z > H(x,\,y,\,t)$) together with conservation of energy, matter and momentum to derive jump conditions on discontinuities at the interface. The equations governing these autocatalytic systems involving propagating reaction-diffusion fronts have been derived in (\cite{Edw}), where they consider the reaction front to be very thin chemically, other assumption in use involves how the densities of the fluids change with temperature. Since the density changes due to thermal expansion of the fluids are small, write the density of the fluids to first order as $\rho(T) = \rho_1[ 1 - \alpha(T - T_1)]$, where $\rho(T)$ is the density at temperature $T$, $\rho_1$ is density at the reference temperature $T_1$ and $\alpha$ is the classical thermal expansion coefficient at constant pressure. The relative difference between the densities of these two fluids at the front is one of the key parameters in this study, and is defined by $\delta = \dfrac{\rho^a - \rho^b}{\rho^b}$, where $\rho^a$ and $ \rho^b$ are the densities of the fluid above the front (unreacted fluid) and of that below the front (reacted fluid), respectively. This is due to the fact that $\rho$ is dependent on the thermal diffusivity of the
fluids, in (\cite{Siv}) the diffusivity is assumed to be infinite.

As in (\cite{Edw}), they obtain the following system of equations
\begin{equation}\label{sistema}
\begin{array}{l}
  \dfrac{\partial \mathbf{V}}{\partial t} + (\mathbf{V}\cdot \nabla)\,\mathbf{V}= - \dfrac{1}{\rho}\,\nabla \mathrm{P}_r + \nu \nabla^2 \mathbf{V},\\
  \nabla \cdot \mathbf{V} =0,  \\
  c= \hat{\mathbf{n}} \cdot \hat{\mathrm{z}}\,\dfrac{ \partial H}{\partial} - \hat{\mathbf{n}}\cdot \mathbf{V}|_{z=H},
\end{array}
\end{equation}
where $\mathrm{V}$ is fluid velocity, $z=H(x,\,y,\,t)$ is the vertical position of the front as a function of $x$, $y$ and $t$, $\hat{\mathbf{n}}$ unit vector pointing normal to the front into the unreacted fluid, $c$ the normal front velocity with respect to the unreacted fluid and $\nu$ is kinematic viscosity.

Together with jump conditions across the interface between the reacted and unreacted fluids given by:
\begin{enumerate}
 \item $ [\hat{\mathbf{n}}\cdot \mathbf{V} ]^{+}_{-} = 0$,
\item $ [\hat{\mathbf{n}}\times \mathbf{V}]^{+}_{-}  = 0$,
 \item $ [\mathbf{P}_r]^{+}_{-} - [n_i\,n_j\,T^\mathrm{V}_{ij}]^{+}_{-} = - \delta\,\rho\,g\,H$,
 \item $ [\epsilon_{ijk}\,n_j\,n_l\,T^\mathbf{V}_{kl}]^{+}_{-} = 0$,
\end{enumerate}
 and viscous stress tensor $ T_{ij}^\mathbf{V} = - \nu\,\rho\,\left( \dfrac{\partial V_i}{\partial x_j} + \dfrac{\partial V_j}{\partial x_i}\right)$, $P_r$ is the reduced pressure given by $P_r = P + \rho\,g\,z$ end $\epsilon_{ijk}$ is the totally antisymmetric tensor.

 By making asymptotic expansions in the delta parameter of the variables  $\mathbf{V}$, $\mathbf{P}_r$ and $H$ , we obtain equation $\ref{ksd}$ (see \cite{Siv}).

 As the usual, we are assuming the well-posedness in the Kato's sense, that is, includes, existence, uniqueness, persistence property and smoothness of the map data-solution, see \cite{APweigh}, \cite{APlow}, \cite{Iorio1}, \cite{Kato} and \cite{KP}. In \cite{AlarOtter} and \cite{cunha} the authors, using the Banach fixed point theorem obtained the local and global well-posedness for the IVP \eqref{ks}. More precisely they proved the following theorems.
\medskip

\noindent {\bf Theorem A} (Local well-posedness) {\em
Let $s\geq 1$. Then for any $\phi\in H^{s}(\R),$ there exists a positive
$T=T_\mu(\|\phi\|_{H^{s}})$  and a unique solution $u\in C([0,T];H^{s}(\R))$ of
the $\mathrm{IVP}$ \eqref{ks}. Furthermore, the flow-map $\phi\mapsto u(t)$
is continuous in the $H^{s}$-norm.}
\medskip

\noindent {\bf Theorem B} (Global well-posedness) {\em
The problem \eqref{ks} is globally well posed in $H^s (\R)$, for $s\geq 1$.}
\medskip

In this paper, we are mainly interested in improving the last two theorems. For this we look at the dissipative effect of the IVP \eqref{ks} and we will use the same methods of Dix \cite{Dix} (see also \cite{Beki}).
In general terms, the Dix method consists of an application of the fixed point theorem in a suitable time-weighted function space. Recently, many authors have used this technique, see, for example, Carvajal and Panthee \cite{pan1,pan2}, Esfahani \cite{amin}, Fonseca, Pastr\'an and Rodr\'iguez-Blanco \cite{pastran} and Pilot \cite{didier}. See also \cite{pan}. We observe that, in these works the highest order dissipative term of the equations, often is greater than or equal to three (unless in \cite{pan2}, which is greater than $5/2$). In our work, the degree of the highest dissipation term is two, this show, in particular, that this technique is also useful when we have a low order of dissipation.


Our results are sharp in the sense that the flow-map data-solution, for the IVP \eqref{ks}, is not $C^2$ at origin, for $s<1/2$. As it is well known, a consequence of this fact is that the Cauchy problem \eqref{ks}, for $s<1/2$, cannot solve by a contraction argument on the integral equation (see \cite{Bour}, \cite{luc1}, \cite{luc2}, \cite{Tzvet} and references therein). 


Now we state the main results of this paper.

\begin{theorem}(Local well-posedness).\label{localmeio}
Let $\mu>0$ and $s>1/2,$ then for all $\phi \in H^s(\R),$ there exists $T=T(\|\phi\|_{H^s})$, a space
$$\X \hookrightarrow C([0,T];H^s(\R))$$
and a unique solution $u$ of \eqref{ks} in $\X$. In addition, the flow map data-solution
$$S: H^s(\R)\to \X \cap C([0,T];H^s), \phi \mapsto u$$
is smooth and
$$u\in C((0,T]; H^{\infty}(\R)).$$
Moreover, if $s'>s$ then the solution with initial data $\phi \in H^{s'}(\R)$ is defined in the same interval $[0,T]$, with $T=T(\|\phi\|_{H^s})$.
\end{theorem}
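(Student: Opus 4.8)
The plan is to recast \eqref{ks} as the Duhamel integral equation
\[
u(t)=E_\mu(t)\phi+\frac12\int_0^t E_\mu(t-\tau)(\p_x u(\tau))^2\,d\tau=:\Psi_\phi(u)(t),
\]
where $E_\mu(t)$ is the Fourier multiplier with symbol $e^{t\Phi_\mu(\xi)}$, $\Phi_\mu(\xi)=-\xi^2+\mu(1+\xi^2)^{-1/2}$. Since $0\le(1+\xi^2)^{-1/2}\le1$, the propagator $E_\mu(t)$ differs from the one-dimensional heat semigroup only by the bounded factor $e^{\mu t}$; in particular it satisfies the smoothing bounds
\[
\|E_\mu(t)f\|_{H^{\sigma+\lambda}}\les t^{-\lambda/2}e^{\mu t}\|f\|_{H^\sigma}\qquad(\lambda\ge0,\ t>0),
\]
and $t\mapsto E_\mu(t)f$ is strongly continuous on every $H^\sigma$. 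I would then take $\X$ to be the time-weighted space with norm
\[
\|u\|_{\X}=\sup_{0<t\le T}\|u(t)\|_{H^s}+\sup_{0<t\le T}t^{\theta}\|u(t)\|_{H^{s+2\theta}},
\]
where $\theta$ is fixed with $3/4-s/2<\theta<1/2$; this interval is nonempty exactly because $s>1/2$, the left inequality making $H^{s+2\theta-1}$ a Banach algebra and the right one producing a positive surplus power of $T$ below. The embedding $\X\hookrightarrow C([0,T];H^s)$, the identity $u(0)=\phi$, and the boundedness of $\phi\mapsto E_\mu(\cdot)\phi\in\X$ are then immediate from the two pieces of the norm together with the smoothing bounds and strong continuity of $E_\mu$.

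The core of the proof is the bilinear estimate
\[
\Big\|\int_0^{\cdot}E_\mu(\cdot-\tau)(\p_x u(\tau))^2\,d\tau\Big\|_{\X}\les T^{\,1/2-\theta}\,\|u\|_{\X}^2
\]
and its Lipschitz analogue, obtained via $(\p_x u)^2-(\p_x v)^2=\p_x(u-v)\,\p_x(u+v)$. To prove it I would use the algebra property of $H^{s+2\theta-1}$ to write
\[
\|(\p_x u(\tau))^2\|_{H^{s+2\theta-1}}\les\|\p_x u(\tau)\|_{H^{s+2\theta-1}}^2=\|u(\tau)\|_{H^{s+2\theta}}^2\le\tau^{-2\theta}\|u\|_{\X}^2 ,
\]
noting that for $s<1$ the slot $\p_x u\in H^{s-1}$ has negative index, which is why both factors must be placed in the weighted slot (when $s\ge3/2$ one may instead work directly in the algebra $H^{s-1}$). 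Feeding this into the Duhamel term and applying the smoothing bound to recover the $H^s$- and $H^{s+2\theta}$-norms — gains of $1-2\theta$ and $1$ derivatives, at the costs $(t-\tau)^{-(1-2\theta)/2}$ and $(t-\tau)^{-1/2}$ respectively — reduces the matter to the Beta-type integrals $\int_0^t(t-\tau)^{-\alpha}\tau^{-2\theta}\,d\tau$ with $\alpha\in\{(1-2\theta)/2,\,1/2\}$; these converge because $2\theta<1$ and $\alpha<1$, and both evaluate to a constant times $t^{1/2-\theta}$, which is precisely the claimed estimate with strictly positive exponent $1/2-\theta$.

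With these two estimates in hand, $\Psi_\phi$ maps the ball of radius $R=2C_\mu\|\phi\|_{H^s}$ of $\X$ into itself and contracts it provided $C_\mu T^{1/2-\theta}R<\frac12$, i.e.\ for $T=T(\|\phi\|_{H^s})$ small enough; the Banach fixed point theorem then yields the unique solution in that ball, and uniqueness in all of $\X$ follows by covering $[0,T]$ with finitely many short subintervals. For the smoothness of the flow map I would apply the (analytic) implicit function theorem to $F(\phi,u)=u-\Psi_\phi(u)$ on $H^s\times\X$: $F$ is real-analytic, being affine in $\phi$ and quadratic in $u$ composed with bounded linear operators, and $D_uF(\phi,u)=I-L_u$ with $\|L_u\|_{\mathcal L(\X)}\les T^{1/2-\theta}\|u\|_{\X}$, hence boundedly invertible for $T$ small; this gives that $\phi\mapsto u$ is real-analytic, in particular $C^\infty$. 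The instantaneous smoothing $u\in C((0,T];H^\infty)$ and the persistence statement are then obtained by bootstrapping: one further application of the smoothing bounds in the Duhamel formula gives $\sup_{0<t\le T}t^{\kappa/2}\|u(t)\|_{H^{s+\kappa}}<\infty$ for a fixed $\kappa>0$, and iterating this (or, equivalently, re-solving \eqref{ks} from a time $t_0\in(0,T)$ with the higher-regularity data $u(t_0)$, which coincides with the original solution by uniqueness) raises the regularity without bound on compact subsets of $(0,T]$; for $\phi\in H^{s'}$ with $s'>s$ the same iteration, combined with a singular Gronwall inequality that controls $\|u(t)\|_{H^{s'}}$ through the already-known finite norm $\|u\|_{\X}$, keeps $u$ in $C([0,T];H^{s'})$ on the whole interval $[0,T]$, $T=T(\|\phi\|_{H^s})$.

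I expect the genuine obstacle to be the bilinear estimate when $s$ is only slightly above $1/2$: there the Sobolev product inequality is barely admissible, the dissipative smoothing exponents approach the non-integrable threshold $1$, and the surplus power $1/2-\theta$ of $T$ degenerates to $0^{+}$, so the argument hinges on the joint solvability of the exponent constraints — which is exactly what the hypothesis $s>1/2$ encodes. Once the weighted bilinear estimate is in place, the contraction scheme, the analyticity of the data-to-solution map, the instantaneous regularization and the persistence of the time of existence are all obtained by standard arguments.
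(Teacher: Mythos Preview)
Your proposal is correct and follows the same overall architecture as the paper---a Dix-style contraction in a time-weighted space---but the implementation differs in two genuine ways.

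First, the weighted norm: the paper takes $\|u\|_{\X}=\sup_{t\in[0,T]}\bigl(\|u(t)\|_{H^s}+t^{(1-s)/2}\|\p_x u(t)\|_{L^2}\bigr)$ (restricting to $1/2<s<1$ and falling back on the known $s\ge1$ theory otherwise), whereas you weight a higher Sobolev slot $t^{\theta}\|u(t)\|_{H^{s+2\theta}}$ with $\theta\in(3/4-s/2,1/2)$. Second, the product estimate: the paper controls the nonlinearity through the elementary Fourier-side bound $\|\widehat{\p_x u}\ast\widehat{\p_x v}\|_{L^\infty_\xi}\le\|\p_x u\|_{L^2}\|\p_x v\|_{L^2}$ together with the $L^2_\xi$-norm of $|\xi|^{s}e^{-(t-\tau)\xi^2}$, while you invoke the Sobolev algebra property of $H^{s+2\theta-1}$. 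Both routes give a positive surplus power of $T$ that degenerates to $0^+$ exactly as $s\downarrow1/2$; the paper's exponent $s/2-1/4$ and your $1/2-\theta$ (subject to $\theta>3/4-s/2$) encode the same threshold. The paper's choice is more concrete---the weighted quantity is literally $\|\p_x u\|_{L^2}$, the object appearing in the nonlinearity, and no algebra inequality is needed---while your choice is more portable, works uniformly for every $s>1/2$ without splitting cases, and makes the bootstrap to higher regularity a direct reuse of the same norm. For the persistence step ($\phi\in H^{s'}$, $s'>s$) the paper introduces a modified space $\tilde{\mathcal X}_T^{s'}$ and reruns the contraction, whereas you invoke a singular Gronwall argument; both are standard and lead to the same conclusion.
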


\begin{theorem}(Global well-posedness).\label{global}
Let $\mu>0$ and $s>1/2$, then the initial value problem \eqref{ks} is globally well-posed in $H^s(\R)$.
\end{theorem}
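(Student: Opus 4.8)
The plan is to upgrade the local solution from Theorem \ref{localmeio} to a global one by means of an \emph{a priori} bound on $\|u(t)\|_{H^s}$ that grows at most at a controlled (say, exponential or polynomial) rate on any finite time interval, so that the local existence time $T=T(\|\phi\|_{H^s})$ can never shrink to zero in finite time and the solution extends. Since the local theory already provides smoothing, $u(t)\in H^\infty(\R)$ for $t>0$, I would first reduce matters to proving a global bound for smooth solutions and then pass to the limit; alternatively one argues directly on the mild solution using the integral equation. The key structural feature to exploit is that the linear part generates the semigroup associated with the symbol $e^{t(-\xi^2 - \mu\,\langle\xi\rangle^{-1})}$ — note the $-\xi^2$ term is \emph{dissipative at high frequencies}, and only the low-frequency/zero-order behaviour (the $-\xi^2$ vanishing near $\xi=0$ together with the sign of the $\mu(1-\partial_x^2)^{-1/2}$ term) can cause growth. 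Thus the dissipation gives, for free, control of high derivatives in $L^2_x$ integrated in time (a parabolic smoothing/energy estimate), while the dangerous term $\frac12(\partial_x u)^2$ is, by Sobolev embedding $H^s\hookrightarrow L^\infty$ for $s>1/2$, lower order relative to what the dissipation recovers.

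Concretely, the steps I would carry out are: (1) Take the $H^s$ (or $L^2$ and then $\dot H^s$) inner product of the equation with $u$, resp. with $\langle D\rangle^{2s}u$. The term $-\int \partial_x^2 u\cdot\langle D\rangle^{2s}u = \|\partial_x\langle D\rangle^s u\|_{L^2}^2 \geq 0$ gives a good (coercive, up to low frequencies) term; the term $-\mu\int (1-\partial_x^2)^{-1/2}u\cdot\langle D\rangle^{2s}u$ is bounded by $\mu\|u\|_{H^s}^2$ in absolute value; and the nonlinear term $\frac12\int (\partial_x u)^2 \langle D\rangle^{2s}u$ must be estimated, using the commutator/Kato--Ponce fractional Leibniz rule and $\|u\|_{W^{1,\infty}}\lesssim \|u\|_{H^s}$ (valid for $s>3/2$; for $1/2<s\le 3/2$ one instead borrows a derivative from the dissipation, writing the nonlinear contribution as $\lesssim \|u\|_{L^\infty}\|\partial_x u\|_{H^s}\|u\|_{H^s}\le \varepsilon\|\partial_x u\|_{H^s}^2 + C_\varepsilon\|u\|_{L^\infty}^2\|u\|_{H^s}^2$ and absorbing the first term into the coercive dissipative term). (2) This yields a differential inequality of the form $\frac{d}{dt}\|u\|_{H^s}^2 \le C(1+\|u\|_{H^s}^2)\,\|u\|_{H^s}^2$ or better, but to close it globally one still needs to rule out blow-up of $\|u(t)\|_{H^s}$ itself; here I would invoke the lower-regularity conserved/controlled quantity: an $L^2$-level (or $H^{-1/2}$-level, matching the $(1-\partial_x^2)^{-1/2}$ structure) estimate. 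Testing against $u$ itself, the nonlinear term is $\frac12\int(\partial_x u)^2 u$, which is \emph{not} obviously signed, so one differentiates once more or uses the Miura-type/derivative structure — writing $v=\partial_x u$ solves a KS-type equation $v_t - \partial_x^2 v - \mu(1-\partial_x^2)^{-1/2}v - \partial_x(\tfrac12 v^2)=0$ (Burgers-type nonlinearity), for which $\frac{d}{dt}\|v\|_{L^2}^2 + 2\|\partial_x v\|_{L^2}^2 \le 2\mu\|v\|_{L^2}^2 + 0$ since $\int \partial_x(v^2)\,v = 0$. This gives a global bound on $\|\partial_x u\|_{L^2}=\|v\|_{L^2}$, hence (with the trivial $L^2$ bound on $u$, obtained similarly or from the equation plus Duhamel) a global bound on $\|u\|_{H^1}$. (3) Finally, bootstrap: with $\|u(t)\|_{H^1}$ globally controlled on $[0,T^*]$, feed this into the $\dot H^s$ estimate of step (1) for any $s>1/2$; since $\|u\|_{L^\infty}\lesssim\|u\|_{H^1}$ is now a known finite quantity, the differential inequality for $\|u\|_{H^s}^2$ becomes linear, $\frac{d}{dt}\|u\|_{H^s}^2\le C(\|u\|_{H^1})\|u\|_{H^s}^2$, and Grönwall gives a finite bound on $[0,T^*]$ for every $T^*<\infty$. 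Combined with the local theory this forbids finite-time blow-up and yields global well-posedness.

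The main obstacle is step (2): the KS nonlinearity $\frac12(\partial_x u)^2$ has no obvious conservation law at the $H^s$ level for the $u$ variable, and the zero-order term $-\mu(1-\partial_x^2)^{-1/2}u$ carries a sign that \emph{produces} growth (it is an anti-dissipative/unstable term at low frequencies, precisely the mechanism behind the chaotic dynamics of KS), so one cannot hope for a uniform-in-time bound — only a locally-in-time one, which is nonetheless enough. The trick that makes it work is passing to the derivative variable $v=\partial_x u$, turning the nonlinearity into conservative Burgers form so that the $L^2$ energy estimate closes modulo the benign exponential factor $e^{2\mu t}$ coming from the anti-dissipative term; care is needed because this requires enough regularity and decay to justify the integration by parts $\int \partial_x(v^2)v\,dx=0$, which is legitimate for the smooth solutions furnished by the parabolic smoothing in Theorem \ref{localmeio} and then transfers to the general case by the continuity of the flow map. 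A secondary technical point is handling the range $1/2<s\le 1$ where one cannot directly use $\|u\|_{W^{1,\infty}}\lesssim\|u\|_{H^s}$; there the argument above already delivers $H^1$ control, and for $s<1$ persistence in $H^s$ follows a fortiori from the $H^1$ theory together with the smoothing, so the only genuinely new content is the $H^1$ \emph{a priori} bound, which is exactly what the Burgers reformulation supplies.
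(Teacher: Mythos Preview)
Your proposal is correct and matches the paper's proof essentially step for step: the paper also passes to $v=\partial_x u$, uses $\int v\,\partial_x(v^2)\,dx=0$ to get $\|\partial_x u(t)\|_{L^2}\le e^{\mu t}\|\phi'\|_{L^2}$, then closes the $L^2$ estimate on $u$ via Gagliardo--Nirenberg and Young, and finishes by combining the resulting $H^1$ a priori bound with the smoothing $u\in C((0,T];H^\infty)$ and the fact that the local existence time depends only on a lower-order norm. One minor slip: the semigroup symbol is $e^{t(-\xi^2+\mu\langle\xi\rangle^{-1})}$, not $e^{t(-\xi^2-\mu\langle\xi\rangle^{-1})}$, which is consistent with your own observation that the zero-order term is anti-dissipative.
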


\begin{theorem}(Ill-posedness).\label{Illks}
Let $s<1/2$, if there exists some $T>0$, such that the problem \eqref{ks} is locally well-posed in $H^s(\R)$, then the flow-map data solution
$$S:H^s (\R)\to C([0,T];H^s(\R)), \phi \mapsto u,$$
is not $C^2$ at zero.
\end{theorem}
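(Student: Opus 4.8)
The plan is to argue by contradiction using the standard Bourgain-type obstruction: if the flow map $S$ were $C^2$ at the origin, then its second Fréchet derivative at $0$ would be a bounded bilinear operator from $H^s \times H^s$ into $C([0,T];H^s)$, and we shall exhibit a one-parameter family of initial data for which this bilinear term blows up as the parameter degenerates, precisely when $s<1/2$. First I would write the Duhamel (integral) formulation of \eqref{ks},
\[
u(t) = W_\mu(t)\phi + \frac12\int_0^t W_\mu(t-\tau)\,(\partial_x u(\tau))^2\,d\tau,
\]
where $W_\mu(t)$ is the (Fourier) semigroup with symbol $e^{t(\xi^2+\mu(1+\xi^2)^{-1/2})}$ arising from the linear part $u_t-\partial_x^2 u-\mu(1-\partial_x^2)^{-1/2}u=0$. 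Consider data $\phi = \gamma\psi$ with $\gamma\in\R$ small. Since $S(0)=0$, differentiating the integral equation in $\gamma$ shows that the first derivative is $\partial_\gamma u|_{\gamma=0} = W_\mu(t)\psi$, and the second derivative is
\[
\partial_\gamma^2 u\big|_{\gamma=0} = \int_0^t W_\mu(t-\tau)\,\big(\partial_x W_\mu(\tau)\psi\big)^2\,d\tau =: B(\psi,\psi)(t).
\]
If $S$ is $C^2$ at $0$, then $\|B(\psi,\psi)(t)\|_{H^s} \lesssim \|\psi\|_{H^s}^2$ uniformly for $t\in[0,T]$.

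Next I would choose the test data on the Fourier side so as to make the bilinear interaction resonant. A natural choice (following Bejenaru--Tao, Molinet--Ribaud--etc.) is $\hat{\psi}_N = \mathbf{1}_{I_N}$ where $I_N$ is an interval of length $O(1)$ centred at frequency $N$ (or a symmetric pair of such intervals so that $\psi_N$ is real-valued), so that $\|\psi_N\|_{H^s}\thicksim N^s$. Then on the output side one computes
\[
\widehat{B(\psi_N,\psi_N)}(t,\xi) = c\int_0^t e^{(t-\tau)(\xi^2+\mu\langle\xi\rangle^{-1})}\!\!\int_{\xi_1+\xi_2=\xi} (i\xi_1)(i\xi_2)\,e^{\tau(\xi_1^2+\mu\langle\xi_1\rangle^{-1})}e^{\tau(\xi_2^2+\mu\langle\xi_2\rangle^{-1})}\hat\psi_N(\xi_1)\hat\psi_N(\xi_2)\,d\xi_1 d\tau.
\]
For the convolution to be supported near $\xi\thicksim 2N$, the factor $\xi_1\xi_2\thicksim N^2$; and because the linear symbols here are \emph{real} and positive (pure growth, no oscillation), the $\tau$-integrand does not oscillate — it is genuinely of size $e^{\tau(\xi^2+\cdots)}\cdot N^2/(\text{something})$ after integration, and crucially $\xi^2-\xi_1^2-\xi_2^2 = 2\xi_1\xi_2 > 0$ for same-sign frequencies, so the time integral $\int_0^t e^{(t-\tau)\xi^2}e^{\tau(\xi_1^2+\xi_2^2)}d\tau$ behaves like $e^{t(\xi_1^2+\xi_2^2)}/(\xi_1^2+\xi_2^2-\xi^2)\thicksim e^{t\,\cdot\,2N^2}/N^2$ — in fact it is even better-behaved, the point being it is bounded \emph{below} by a positive quantity of size $\thicksim t$ (for small $t$) times $N^2$ coming from $\xi_1\xi_2$, with no cancellation. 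Hence $\|B(\psi_N,\psi_N)(t)\|_{H^s}\gtrsim N^s\cdot N^2\cdot N^{-?}$; carefully bookkeeping the powers, one finds $\|B(\psi_N,\psi_N)(t)\|_{H^s}/\|\psi_N\|_{H^s}^2 \gtrsim N^{1-2s}\to\infty$ as $N\to\infty$ whenever $s<1/2$, contradicting the bound above.

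The main obstacle — and the step I would spend the most care on — is the precise estimate of the $\tau$-integral and the resulting power of $N$: unlike dispersive equations where one exploits oscillation and resonance sets, here the symbols are real exponential-growth factors, so I must make sure the lower bound on $\int_0^t W_\mu(t-\tau)(\partial_x W_\mu(\tau)\psi_N)^2 d\tau$ is not secretly killed by the $e^{(t-\tau)\xi^2}$ factor (it is not, since all exponentials are $\geq 1$ on $[0,T]$ and one gets a clean lower bound $\gtrsim t\,e^{0}\cdot(\text{convolution of }\xi_1\xi_2\hat\psi\hat\psi)$ by restricting $\tau$ to a subinterval of length $\thicksim t$). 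A secondary technical point is choosing $\psi_N$ real-valued and ensuring the two frequency lumps in the convolution reinforce rather than cancel; using $\hat\psi_N$ supported on $I_N\cup(-I_N)$ with the high-high-to-high interaction isolated handles this. Once the lower bound $\gtrsim N^{1-2s}$ for the normalized second derivative is established, the contradiction with $C^2$-regularity of $S$ at $0$ is immediate, proving the theorem.
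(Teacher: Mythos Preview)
Your proposal contains a sign error that is fatal to the argument as written. You state that the semigroup $W_\mu(t)$ has Fourier symbol $e^{t(\xi^2+\mu\langle\xi\rangle^{-1})}$ and is ``pure growth,'' so that ``all exponentials are $\geq 1$ on $[0,T]$.'' In fact, from $u_t=\partial_x^2 u+\mu(1-\partial_x^2)^{-1/2}u+\cdots$ one reads off
\[
\Phi(\xi)=-\xi^2+\mu\langle\xi\rangle^{-1},
\]
so $E_\mu(t)=e^{t\Phi}$ is \emph{dissipative}, behaving like $e^{-t\xi^2}$ at high frequency. Your high--high--to--high test (both inputs near $+N$, output near $2N$) is then annihilated by this damping: with $\xi\sim 2N$, $\xi_1,\xi_2\sim N$ one gets
\[
\int_0^t e^{(t-\tau)\Phi(\xi)}e^{\tau(\Phi(\xi_1)+\Phi(\xi_2))}\,d\tau
\;\sim\;\frac{e^{-2N^2 t}-e^{-4N^2 t}}{2N^2},
\]
which is exponentially small in $N$, and no polynomial lower bound survives. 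The sentence ``it is not, since all exponentials are $\geq 1$'' is precisely where the argument breaks.

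The paper's proof repairs this by placing the two input frequencies at \emph{opposite} signs: $\hat\phi$ is supported on $I_1=[-N,-N+r]$ and $\hat\psi$ on $I_2=[N+r,N+2r]$, so the output frequency lies in $[r,3r]=O(1)$. At low output frequency the factor $e^{t\Phi(\xi)}$ is harmless, the time integral is $\sim N^{-2}$ (merely polynomially small), and this is exactly compensated by the derivative factors $|\xi_1\xi_2|\sim N^2$; normalizing the data to $\|\phi\|_{H^s},\|\psi\|_{H^s}\sim 1$ then yields $\|B(\phi,\psi)(t)\|_{H^s}\gtrsim N^{1-2s}$. The high--high\,$\to$\,low interaction is the missing idea: once you swap your single bump at $+N$ for two bumps at $\pm N$ and look at the output near the origin, the rest of your outline (identifying the second Fr\'echet derivative with the bilinear Duhamel term and deriving a contradiction for $s<1/2$) goes through exactly as you describe.
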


An open problem about the IVP \eqref{ks} is to investigate the existence of a global attractor (see \cite{attAlarcon}, \cite{attAlarcon2} and \cite{attAlarcon1}). In view of the ideas in \cite{Ab}, \cite{attAlarcon}, \cite{attAlarcon1}, \cite{luc3} and \cite{Teman} we believe that it's possible to show the existence of the global attractor in $H^s(\R)$, where $s>1/2$. For a general theory about the global attractor, see \cite{Ab} and \cite{Teman}.

Another interesting question would be to explore the well-posedness for the IVP \eqref{ks} with bore-like data. That is, we deal with the problem \eqref{ks}, with $g$ instead of $\phi$, where $g$ satisfies

\begin{itemize}
\item [i)] $g(x)\to C_{\pm}$ with $x\to \pm \infty$;
\item [ii)] $g' \in H^s$, for some $s>1/2$;
\item [iii)] $(g-C_{\pm})\in L^2 ([0,\infty))$ and $(g-C_{\pm})\in L^2 (-\infty,0])$;
\item [iv)] $\p_x^{-1} g$ and $\p_x^{-1}g'\in H^{3/2}(\R),$
\end{itemize}
and $\widehat{\p_x^{-1}g}(\xi)=\displaystyle\frac{\hat{g}(\xi)}{i\xi}$.

In particular, these results on bore-like data, would improve those obtained in \cite{bore}. More information on bore-like data, can be found in \cite{Iorio1}.

This paper is organized as follows. In the next section, we derive some preliminary estimates. The well-posedness for the IVP \eqref{ks}, for $s>1/2$, is established in section 3. In section 4 we deal with the limit when $\mu \downarrow 0$. Finally, in section 5 we state the results about Ill-posedness for the IVP \eqref{ks}.

\subsection{Notation}
In this article, we use the following notation. We say $a \lesssim b$ if there exists a constant $c>0$ such that $a\leq c b$.
 By $a\thicksim b$ we mean that $a\lesssim b$ and $b\lesssim a$. We write $a \les_{l} b$  when the constant depends on only parameter $l$. The Fourier transform of $f$, is defined by
 $$\hat{f}(\xi)=\int_{\R} e^{-i\xi x}f(x)dx.$$

 If $s\in \R$, $H^s:=H^s(\R)$ represents the nonhomogeneous Sobolev space defined as
 $$H^s(\R)=\{f\in \mathcal{S}'(\R): \|f\|_{H^s}<\infty\},$$
 where
 $$\|f\|_{H^s}=\|\langle \xi \rangle^s \hat{f}\|_{L^2_\xi},$$
 and $\langle \xi \rangle=(1+\xi^2)^{1/2}$.

In addition, we define the Bessel potential $J^s$ by
$$(J^s f)^{\wedge}(\xi)=\langle \xi \rangle^s\hat{f}(\xi), \ \mbox{for all} \ f\in \mathcal{S}'(\R),$$
hence $\|J^s f\|_{L^2_x}=\|f\|_{H^s}$.

In the rest of the paper, we will denote the $L^2$-norm in the $x$ variable by $\|\cdot\|_{L^2_x}:=\|\cdot\|$.


\section{Preliminary estimates}

By defining $$\Phi(\xi)=-\xi^2 +\frac{\mu}{(1+\xi^2)^{1/2}},$$ the semigroup associated with the linear part of \eqref{ks} is defined via Fourier transform by
\begin{equation}\label{semi}
E_\mu (t)\psi=\Big(e^{t\Phi(\xi)}\hat{\phi}(\xi)\Big)^{\vee},
\end{equation}
and the integral equation associated to \eqref{ks}
\begin{equation}\label{integral}
F_\mu(u)(t):=u(t)=E_\mu(t)\phi+\frac 12 \int_0^t E_\mu (t-\tau)(\p_x u)^2 (\tau)d\tau.
\end{equation}

The following result is useful in establishing of estimates for the semigroup $E_\mu$.
\begin{lemma}\label{lemma1}
 Let $\mu>0$, $\lambda\geq 0$, $\nu \in \R$ and $t\in [0,T]$. Then

\begin{equation}\label{lemalambda}
\|\xi^{2\lambda}e^{t(-\xi^2 +\frac{\mu}{(1+\xi^2)^{1/2}})}\|_{L^{\infty}_\xi}\leq e^{\mu T}\Big(\frac{\lambda}{e}\Big)^{\lambda} t^{-\lambda}
\end{equation}
and
\begin{equation}\label{lemanu}
\||\xi|^\nu e^{-t\xi^2}\|_{L^2_{\xi}}=c_\nu t^{-\frac{\nu}{2}-\frac{1}{4}}.
\end{equation}
\end{lemma}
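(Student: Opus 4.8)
The plan is to prove the two estimates separately, since \eqref{lemanu} is a standard Gaussian computation and \eqref{lemalambda} is a straightforward calculus optimization.

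For \eqref{lemanu}, I would write out the $L^2_\xi$-norm as
\[
\||\xi|^\nu e^{-t\xi^2}\|_{L^2_\xi}^2 = \int_{\R} |\xi|^{2\nu} e^{-2t\xi^2}\, d\xi,
\]
and perform the change of variables $\eta = \sqrt{2t}\,\xi$ (so $d\xi = (2t)^{-1/2}\, d\eta$ and $|\xi|^{2\nu} = (2t)^{-\nu}|\eta|^{2\nu}$). This yields $(2t)^{-\nu - 1/2}\int_{\R}|\eta|^{2\nu}e^{-\eta^2}\,d\eta$, where the remaining integral converges precisely because $\nu > -1/2$ is implicit (it is finite for the relevant range of $\nu$) and equals a constant $\tilde c_\nu = \Gamma(\nu + 1/2)$. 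Taking square roots gives $\||\xi|^\nu e^{-t\xi^2}\|_{L^2_\xi} = c_\nu\, t^{-\nu/2 - 1/4}$ with $c_\nu = (\tfrac12)^{\nu/2+1/4}\,\tilde c_\nu^{1/2}$, as claimed.

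For \eqref{lemalambda}, the key observation is that $\Phi(\xi) = -\xi^2 + \mu(1+\xi^2)^{-1/2} \le -\xi^2 + \mu$, since $(1+\xi^2)^{-1/2}\le 1$. Hence for $t\in[0,T]$,
\[
|\xi|^{2\lambda} e^{t\Phi(\xi)} \le |\xi|^{2\lambda} e^{-t\xi^2} e^{\mu t} \le e^{\mu T}\, |\xi|^{2\lambda} e^{-t\xi^2}.
\]
It then suffices to maximize $g(\xi) := \xi^{2\lambda}e^{-t\xi^2}$ over $\xi\in\R$. Setting $r = \xi^2\ge 0$, we maximize $h(r) = r^\lambda e^{-tr}$; differentiating, $h'(r) = r^{\lambda-1}e^{-tr}(\lambda - tr)$, so the maximum occurs at $r = \lambda/t$, giving $h(\lambda/t) = (\lambda/t)^\lambda e^{-\lambda} = (\lambda/e)^\lambda t^{-\lambda}$. (The degenerate case $\lambda = 0$ gives the bound $1$, consistent with the convention $(\lambda/e)^\lambda = 1$.) Combining, $\|\xi^{2\lambda}e^{t\Phi(\xi)}\|_{L^\infty_\xi} \le e^{\mu T}(\lambda/e)^\lambda t^{-\lambda}$.

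I do not anticipate a genuine obstacle here; both estimates are elementary. The only mild subtlety is bookkeeping the constants and noting that the crude bound $(1+\xi^2)^{-1/2}\le 1$ is exactly what decouples the dissipative Gaussian decay $e^{-t\xi^2}$ from the lower-order growth factor $e^{\mu t}$, which is ultimately what makes the time-weighted space argument of Dix applicable despite the dissipation being only of order two.
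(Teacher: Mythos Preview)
Your proposal is correct and follows essentially the same route as the paper: for \eqref{lemalambda} the paper also bounds $e^{t\Phi(\xi)}\le e^{\mu t}e^{-t\xi^2}$ and then takes the maximum of $\xi^{2\lambda}e^{-t\xi^2}$, and for \eqref{lemanu} it performs the scaling $\xi=t^{-1/2}w$ (you used $\eta=\sqrt{2t}\,\xi$, which is the same idea up to the constant). Your write-up is in fact more explicit than the paper's, which simply invokes ``the maximum value'' without carrying out the derivative computation.
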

\begin{proof} First we will establish \eqref{lemalambda}.
For this, note that for all $\xi \in\R$ and $t\in [0,T]$
$$\xi^{2\lambda} e^{t\Phi(\xi)}\leq e^{\mu t}\xi^{2\lambda}e^{-t\xi^2}.$$

Therefore, looking at the maximum value of function $(\cdot)^{2\lambda}e^{-t(\cdot)^2}$, we obtain the result.

About identity \eqref{lemanu}, by using the change of variables $\xi=t^{-1/2}w$

$$\||\xi|^\nu e^{-t\xi^2}\|^2_{L^2_\xi}=t^{-\nu-1/2} \int w^{2\nu}e^{-2w^2}dw=c_\nu^2 t^{-\nu-1/2}.$$
This finish the proof.

\end{proof}

In the following, we deal with the well-posedness for the IVP \ref{ks}, where $s>1/2$. First, we need a technical lemma, which will be useful in our linear estimates. This is a new version of Lemma 2.1 of \cite{pan2}. 
\begin{lemma}\label{M1}
There exists $M>0$ such that if $|\xi|\geq M$, then
\begin{equation}\label{phi}
\Phi(\xi)=-\xi^2+\mu \lank^{-1}<-1
\end{equation}
and
\begin{equation}\label{phi1}
|\Phi(\xi)|\geq \frac{\xi^2}{2}.
\end{equation}
\end{lemma}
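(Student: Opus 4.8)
The plan is to exploit the fact that, for a fixed $\mu>0$, the term $\mu\lank^{-1}$ appearing in $\Phi(\xi)=-\xi^2+\mu\lank^{-1}$ is a \emph{bounded} perturbation of the dominant quadratic term: since $\lank=(1+\xi^2)^{1/2}\ge 1$ for every $\xi\in\R$, we have $0<\mu\lank^{-1}\le \mu$. Consequently both \eqref{phi} and \eqref{phi1} reduce to the elementary statement that $\xi^2$ eventually dominates the constant $\mu$, and the only thing to do is to extract an explicit threshold $M=M(\mu)$.

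For \eqref{phi}, I would simply write $\Phi(\xi)=-\xi^2+\mu\lank^{-1}\le -\xi^2+\mu$, so that $\Phi(\xi)<-1$ holds as soon as $-\xi^2+\mu<-1$, i.e. $\xi^2>\mu+1$. Thus $|\xi|>\sqrt{\mu+1}$ already forces $\Phi(\xi)<-1$.

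For \eqref{phi1}, note that once $|\xi|>\sqrt{\mu+1}$ we in particular have $\Phi(\xi)<-1<0$, hence $|\Phi(\xi)|=\xi^2-\mu\lank^{-1}\ge \xi^2-\mu$; and $\xi^2-\mu\ge \xi^2/2$ exactly when $\xi^2\ge 2\mu$, i.e. $|\xi|\ge\sqrt{2\mu}$. Therefore it suffices to take $M:=\max\{\sqrt{\mu+1},\,\sqrt{2\mu}\}$ (or any larger number): for $|\xi|\ge M$ both inequalities hold simultaneously.

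I do not expect a genuine obstacle here — this is an asymptotic estimate whose proof is essentially the two display lines above. The only point that deserves attention is bookkeeping: one should make sure the threshold $M$ depends on $\mu$ alone (and not on $T$, nor on the Sobolev exponent $s$), since the lemma is later invoked uniformly in those parameters when deriving the linear estimates in $\X$. An alternative, slightly slicker but less explicit, route would be to observe that $\Phi(\xi)/(-\xi^2)\to 1$ as $|\xi|\to\infty$ and read off $M$ from the definition of the limit; I would nonetheless prefer the explicit computation above because it produces a concrete value of $M$.
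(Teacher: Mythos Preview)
Your proof is correct. The paper takes precisely the limit route you mention as an alternative at the end: it just records that $\Phi(\xi)/\xi^2\to -1$ and $\mu\lank^{-1}/\xi^2\to 0$ as $|\xi|\to\infty$ and leaves the extraction of $M$ implicit. Your explicit computation using $0<\mu\lank^{-1}\le\mu$ is more informative, since it yields a concrete threshold $M=\max\{\sqrt{\mu+1},\sqrt{2\mu}\}$ depending only on $\mu$, whereas the limit argument is nonconstructive; both approaches are equally short, and your remark that $M$ is independent of $T$ and $s$ is a worthwhile observation that the paper does not make explicit.
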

\begin{proof}
The inequalities \eqref{phi} and \eqref{phi1} follows from
$$\lim_{|\xi|\to \infty}\frac{\Phi(\xi)}{\xi^2}=-1$$
and
$$\lim_{|\xi|\to \infty}\frac{\mu \lank^{-1}}{\xi^2}=0.$$
\end{proof}
The next lemma is a simple result about calculus.
\begin{lemma}\label{calculus}
Let $f(t)=t^\alpha e^{t \beta},$ $\alpha>0$ and $\beta<0$. Then, for all $t\geq 0$
$$f(t)\leq \Big(\frac{\alpha}{|\beta|}\Big)^{\alpha}e^{-\alpha}.$$
\end{lemma}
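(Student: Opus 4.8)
The plan is to proceed by a routine single-variable calculus argument: locate the unique interior critical point of $f$ on $[0,\infty)$ and evaluate $f$ there. Since $\alpha>0$ we have $f(0)=0$, and since $\beta<0$ we have $f(t)\to 0$ as $t\to\infty$; because $f$ is smooth and strictly positive on $(0,\infty)$, its maximum on $[0,\infty)$ is attained at an interior point where $f'(t)=0$.

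First I would compute
$$f'(t)=\alpha t^{\alpha-1}e^{t\beta}+\beta t^{\alpha}e^{t\beta}=t^{\alpha-1}e^{t\beta}\big(\alpha+\beta t\big),$$
so on $(0,\infty)$ the only zero of $f'$ is at $t_\ast=-\alpha/\beta=\alpha/|\beta|>0$, with $f'>0$ on $(0,t_\ast)$ and $f'<0$ on $(t_\ast,\infty)$; hence $t_\ast$ is the global maximum. Then I would simply substitute $t_\ast$ into $f$:
$$f(t_\ast)=\Big(\frac{\alpha}{|\beta|}\Big)^{\alpha}e^{\beta\cdot(\alpha/|\beta|)}=\Big(\frac{\alpha}{|\beta|}\Big)^{\alpha}e^{-\alpha},$$
using $\beta/|\beta|=-1$. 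This gives $f(t)\le f(t_\ast)=\big(\alpha/|\beta|\big)^{\alpha}e^{-\alpha}$ for all $t\ge 0$, which is the claimed bound.

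There is essentially no obstacle here; the only point requiring a word of care is the behaviour at the endpoints ($t=0$ and $t\to\infty$) to justify that the critical point is a global rather than merely local maximum, but both follow immediately from $\alpha>0$ and $\beta<0$. One could alternatively absorb this lemma into Lemma \ref{lemma1}, since inequality \eqref{lemalambda} is precisely this computation applied with $\alpha=\lambda$ and $\beta=-\xi^2$ (after the bound $e^{t\Phi(\xi)}\le e^{\mu t}e^{-t\xi^2}$), but stating it separately makes the later linear estimates cleaner.
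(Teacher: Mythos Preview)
Your argument is correct and is exactly the standard one-variable optimization that the cited reference (Lemma~2.3 of \cite{pan2}) carries out; the paper itself does not reproduce the proof but simply points to that lemma. Your endpoint analysis and the computation $f'(t)=t^{\alpha-1}e^{t\beta}(\alpha+\beta t)$ leading to the unique maximizer $t_\ast=\alpha/|\beta|$ are precisely what is needed, so nothing further is required.
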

\begin{proof}
See Lemma 2.3 of \cite{pan2}.
\end{proof}
Next, we present the function spaces appropriated for to show the existence of a solution.

Let $0<s<1$ and $0<T\leq 1$, then we define
\begin{equation}
\X=\{u\in C([0,T];H^s): \|u\|_{\X}<\infty\},
\end{equation}
where
\begin{equation}\label{X}
\|u\|_{\X}:=\sup_{[0,T]}\Big(\|u(t)\|_{H^s}+t^{\frac{1-s}{2}}\|\p_x u(t)\|_{L^2}\Big).
\end{equation}
 In the following, we present the linear estimates in the spaces $\X$. The proof follows the same ideas contained in Lemma 2.6 of \cite{pan2}.
\begin{lemma}\label{linearks}
Let $\mu>0,$ $0<T\leq 1,$ $s<1$, $t\in [0,T]$ and $\phi \in H^s(\R)$. Then
\begin{equation}
\|E_{\mu}(t)\phi\|_{\X}\leq C\|\phi\|_{H^s},
\end{equation}
where $C$ depends on $s$, $\mu$, $T$ and $M$, with $M$ as in Lemma \ref{M1}.
\end{lemma}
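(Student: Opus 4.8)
The plan is to bound the two pieces of the $\X$-norm of $E_\mu(t)\phi$ separately, namely $\|E_\mu(t)\phi\|_{H^s}$ and $t^{\frac{1-s}{2}}\|\p_x E_\mu(t)\phi\|_{L^2}$, uniformly for $t\in[0,T]$. For the first piece, one writes $\|E_\mu(t)\phi\|_{H^s}^2=\int \lank^{2s}e^{2t\Phi(\xi)}|\ha(\xi)|^2\,d\xi$ and uses the pointwise bound $e^{t\Phi(\xi)}\le e^{\mu t}e^{-t\xi^2}\le e^{\mu T}$ valid for all $\xi\in\R$ and $t\in[0,T]$, coming from $\Phi(\xi)=-\xi^2+\mu\lank^{-1}\le -\xi^2+\mu$. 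This immediately gives $\|E_\mu(t)\phi\|_{H^s}\le e^{\mu T}\|\phi\|_{H^s}$.

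For the second (and more delicate) piece I would split the Fourier space into low frequencies $|\xi|\le M$ and high frequencies $|\xi|>M$, with $M$ as in Lemma~\ref{M1}. On $|\xi|\le M$ we have $|\xi|\le M\lank^{s-s}$... more precisely $|\xi|^{1-s}\le M^{1-s}$ and $e^{t\Phi(\xi)}\le e^{\mu T}$, so $\|\p_x E_\mu(t)\phi\|_{L^2(|\xi|\le M)}\lesssim_{M,\mu,T}\|\phi\|_{H^s}$ and the extra factor $t^{\frac{1-s}{2}}\le 1$ is harmless. On $|\xi|>M$ the idea is to extract the derivative loss from the dissipation: write
\begin{equation*}
|\xi|\,e^{t\Phi(\xi)}=\bigl(|\xi|^{1-s}\,e^{\frac{t}{2}\Phi(\xi)}\bigr)\cdot\bigl(|\xi|^{s}\,e^{\frac{t}{2}\Phi(\xi)}\bigr),
\end{equation*}
and on $|\xi|>M$ use $\Phi(\xi)\le-\tfrac12\xi^2$ (from \eqref{phi1}, since $\Phi<0$ there) to dominate the first factor by $|\xi|^{1-s}e^{-\frac{t}{4}\xi^2}$. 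Applying Lemma~\ref{calculus} with $\alpha=\tfrac{1-s}{2}$ and $\beta=-\tfrac{t}{4}$ to the function $(\xi^2)^{\frac{1-s}{2}}e^{-\frac{t}{4}\xi^2}$ yields $|\xi|^{1-s}e^{-\frac{t}{4}\xi^2}\lesssim_s t^{-\frac{1-s}{2}}$, which is exactly the negative power of $t$ that the weight $t^{\frac{1-s}{2}}$ in the $\X$-norm cancels. The remaining factor satisfies $|\xi|^{s}e^{\frac{t}{2}\Phi(\xi)}\lesssim\lank^s e^{\mu T}$, so after multiplying by $t^{\frac{1-s}{2}}$ and taking $L^2_\xi$ we recover $\lesssim_{s,\mu,T,M}\|\phi\|_{H^s}$. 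Combining the low- and high-frequency estimates and the bound on the first piece gives the claimed inequality.

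The main obstacle is the high-frequency estimate for $\p_x E_\mu(t)\phi$: the derivative costs one power of $|\xi|$ but the Sobolev datum only controls $s<1$ powers, so one must borrow the missing $|\xi|^{1-s}$ from the dissipative exponential, and the price is the singular factor $t^{-\frac{1-s}{2}}$ as $t\downarrow0$. The whole point of the time-weighted norm \eqref{X} — and of Dix's method — is that this singularity is integrable/bounded after multiplication by $t^{\frac{1-s}{2}}$, and the quantitative control is furnished precisely by Lemma~\ref{calculus} (equivalently by the $\lambda$-estimate \eqref{lemalambda} of Lemma~\ref{lemma1}). One must be a little careful that the constants blow up as $s\uparrow1$ (through $(\tfrac{1-s}{2}/|\beta|)^{\frac{1-s}{2}}$-type factors), but since the statement fixes $s<1$ this is acceptable; one should also make sure the low-frequency part is handled without any $t$-weight help, which is immediate since there $e^{t\Phi}$ and $|\xi|$ are both bounded.
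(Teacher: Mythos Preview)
Your proposal is correct and follows essentially the same route as the paper: bound $\|E_\mu(t)\phi\|_{H^s}$ by $e^{\mu T}\|\phi\|_{H^s}$, then for $t^{\frac{1-s}{2}}\|\p_x E_\mu(t)\phi\|_{L^2}$ split into $|\xi|\le M$ (trivial bound) and $|\xi|>M$ (use Lemma~\ref{M1} and the calculus lemma to trade the dissipative exponential for the factor $t^{-\frac{1-s}{2}}$). The only cosmetic difference is that the paper applies Lemma~\ref{calculus} in the $t$-variable, bounding $t^{\frac{1-s}{2}}e^{t\Phi(\xi)}$ pointwise by $\bigl(\tfrac{1-s}{2|\Phi(\xi)|}\bigr)^{\frac{1-s}{2}}$ and then using $|\Phi(\xi)|\ge\xi^2/2$, whereas you split the exponential in half and apply the lemma in the $\xi^2$-variable (equivalently, invoke \eqref{lemalambda} directly); both yield the same $C_s\,t^{-\frac{1-s}{2}}$ control of $|\xi|^{1-s}e^{ct\Phi(\xi)}$.
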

\begin{proof}
By the definition of semigroup $E_\mu$, the first term in \eqref{X} can be estimated as follows
\begin{equation}
\begin{split}\label{first1}
\|E_\mu(t)\phi\|_{H^s}&\leq \|e^{t\Phi(\xi)}\lank^s \ha\|\\
&\leq \|e^{t\Phi(\xi)}\|_{L^\infty_\xi}\|\phi\|_{H^s}\\
&\lesssim e^{\mu T}\|\phi\|_{H^s}.
\end{split}
\end{equation}
For estimate the second term of the $\X$-norm, putting $\alpha=\frac{1-s}{2}$ and using the Plancherel identity, we have
\begin{equation}
\begin{split}\label{second}
t^{\frac{1-s}{2}}\|\p_x E_\mu(t)\phi\|_{L^2}&= t^{\frac{1-s}{2}} \|\xi e^{t\Phi(\xi)} \ha\|\\
&= t^{\frac{1-s}{2}} \|\xi \lank^{-s}e^{t\Phi(\xi)}\lank^s \ha\|_{L^2_\xi}\\
&\leq t^\frac{1-s}{2} \|\xi \lank^{-s}e^{t\Phi(\xi)}\|_{L^\infty}\|\phi\|_{H^s}.
\end{split}
\end{equation}
We can write
\begin{equation}
\begin{split}\label{dec}
\|\xi \lank^{-s}e^{t\Phi(\xi)}\|_{L^\infty}\leq&\|\lank^{1-s}e^{t\Phi(\xi)} \chi_{\{|\xi|\leq M\}}\|_{L^\infty_\xi}+ \|\lank^{1-s}e^{t\Phi(\xi)} \chi_{\{|\xi|\geq M\}}\|_{L^\infty_\xi}\\
&:=C_M+I,
\end{split}
\end{equation}
then
\begin{equation}\label{M}
t^\frac{1-s}{2} C_M  \leq C_M.
\end{equation}
By using Lemmas \ref{calculus} and \ref{M}, with $\alpha=\frac{1-s}{2}$ and $\beta=\Phi(\xi)$, follows that
\begin{equation}
\begin{split}\label{final1}
t^\alpha I&=t^\alpha \|\lank^{1-s}e^{t\Phi(\xi)} \chi_{\{|\xi|\geq M\}}\|_{L^\infty_\xi}\\
&\leq \|\lank^{1-s}\Big(\frac{\alpha}{|\Phi(\xi)|}\Big)^{\alpha}e^{-\alpha}\chi_{\{|\xi|\geq M\}}\|_{L^\infty_\xi}\\
&\leq (2\alpha)^{\alpha} \|\lank^{1-s}\xi^{-2\alpha}\chi_{\{|\xi|\geq M\}}\|_{L^\infty_\xi}\\
&\leq (2\alpha)^{\alpha} \|(\xi^{-2\alpha}+|\xi|^{1-s}\xi^{-2\alpha})\chi_{\{|\xi|\geq M\}}\|_{L^\infty_\xi}\\
&\leq (2\alpha)^{\alpha}(M^{-2\alpha}+1),
\end{split}
\end{equation}
where above, we used
$$\lank^{1-s}\lesssim 1+|\xi|^{1-s}.$$
Therefore, by \eqref{first1}--\eqref{final1}, we conclude the proof.
\end{proof}
Next, we deduce some bilinear estimates useful to proof the Theorem \ref{localmeio}.
\begin{proposition}\label{pro4}
Let $\mu>0,$ $0<T\leq 1$ and $1/2<s<1$. Then
$$\Big\| \int_{0}^{t} E_\mu (t-\tau)(\p_x u \p_x v)(\tau)d\tau\Big\|_{\X} \lesssim e^{\mu T} T^{\frac{s}{2}+\frac14}\|u\|_{\X}\|v\|_{\X},$$
for all $u,v \in \X$.
\end{proposition}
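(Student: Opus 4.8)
The plan is to estimate the two pieces of the $\X$-norm separately, namely $\|\cdot\|_{H^s}$ and $t^{(1-s)/2}\|\p_x(\cdot)\|_{L^2}$, applied to the Duhamel term $w(t):=\int_0^t E_\mu(t-\tau)(\p_x u\,\p_x v)(\tau)\,d\tau$. In both cases I would push all derivatives onto the semigroup via Plancherel, writing $(\p_x u\,\p_x v)(\tau)$ in Fourier variables and bounding its $H^{s-1}$-norm (or $L^2$-norm) by the product of the natural norms of $u$ and $v$ in $\X$. The key structural observation is that the quadratic nonlinearity involves $\p_x u\,\p_x v$, and the $\X$-norm controls $\p_x u$ in $L^2$ with the time weight $\tau^{-(1-s)/2}$; using that $H^s$ with $s>1/2$ is an algebra-like space one gets a pointwise-in-$\tau$ bound
$$\|(\p_x u\,\p_x v)(\tau)\|_{H^{s-1}}\lesssim \|\p_x u(\tau)\|_{L^2}\|\p_x v(\tau)\|_{L^2}\,\langle\text{stuff}\rangle \lesssim \tau^{-(1-s)}\|u\|_{\X}\|v\|_{\X},$$
the point being that $s-1<0$ so we may trade a derivative of the product for the low-regularity factor $\langle\xi\rangle^{s-1}$, and the product of two $L^2$ functions lands in $H^{s-1}$ for $s-1<-1/2$, i.e. precisely the range $s<1/2$ — so actually one needs the sharper bilinear bound $\|fg\|_{H^{s-1}}\lesssim \|f\|_{L^2}\|g\|_{H^{s-1/2+}}$ or a Sobolev-in-$x$ estimate; I will instead keep one factor in $L^2$ and put $\langle\xi\rangle^{s}$ on the semigroup, exploiting $s<1$ to gain integrable negative powers of $t-\tau$.

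Concretely, for the $H^s$-part I would write, using $\lan\xi\rangle^s \lesssim \lan\xi\rangle^{s-1}|\xi| + \lan\xi\rangle^{s-1}$ inside the $\tau$-integral,
\begin{equation*}
\|w(t)\|_{H^s}\lesssim \int_0^t \big\|\lan\xi\rangle^{s-1}|\xi|\,e^{(t-\tau)\Phi(\xi)}\big\|_{L^\infty_\xi}\,\|(\p_x u\,\p_x v)(\tau)\|_{L^2}\,d\tau,
\end{equation*}
and split the $L^\infty_\xi$-factor into $|\xi|\le M$ and $|\xi|\ge M$ as in Lemma~\ref{linearks}. On the high-frequency piece Lemma~\ref{calculus} with $\alpha$ chosen so that $\lan\xi\rangle^{s-1}|\xi|\,|\Phi(\xi)|^{-\alpha}\lesssim |\xi|^{s-2\alpha}$ is bounded (take $\alpha=s/2$, using $s<1$) gives a factor $(t-\tau)^{-s/2}$; on the low-frequency piece one just gets a constant $C_M e^{\mu T}$. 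Then using $\|(\p_x u\,\p_x v)(\tau)\|_{L^2}\lesssim \|\p_x u(\tau)\|_{L^2}\|v(\tau)\|_{H^s}\lesssim \tau^{-(1-s)/2}\|u\|_{\X}\|v\|_{\X}$ (here $H^s\hookrightarrow L^\infty$ since $s>1/2$), the $\tau$-integral is $\int_0^t (t-\tau)^{-s/2}\tau^{-(1-s)/2}\,d\tau = B(1-\tfrac{s}{2},\tfrac{1+s}{2})\,t^{s/2+1/4}$ by the Beta-function identity, which produces exactly the claimed $T^{s/2+1/4}$ after taking the supremum over $t\in[0,T]$. For the derivative part one repeats the argument with an extra $|\xi|$, i.e. bounds $\|\p_x w(t)\|_{L^2}$ by $\int_0^t \||\xi|\,e^{(t-\tau)\Phi(\xi)}\|_{L^\infty_\xi}\|(\p_x u\,\p_x v)(\tau)\|_{L^2}\,d\tau$, uses Lemma~\ref{lemma1} (or Lemma~\ref{calculus} with $\lambda=1/2$) to get $(t-\tau)^{-1/2}e^{\mu T}$, multiplies by the prefactor $t^{(1-s)/2}$, and again lands on a Beta integral $\int_0^t (t-\tau)^{-1/2}\tau^{-(1-s)/2}d\tau \sim t^{s/2}$, so $t^{(1-s)/2}\cdot t^{s/2}=t^{1/2}\lesssim T^{s/2+1/4}$ for $s<1/2$... one must be a bit careful here and instead estimate $\|(\p_x u\,\p_x v)(\tau)\|_{L^2}\lesssim \tau^{-(1-s)}\|u\|_\X\|v\|_\X$ using the time weight on \emph{both} factors, giving $\int_0^t(t-\tau)^{-1/2}\tau^{-(1-s)}d\tau\sim t^{s-1/2}$ and hence $t^{(1-s)/2+s-1/2}=t^{s/2}$, still consistent with the stated power once one checks the exponents; the cleanest route is to distribute the weight asymmetrically so that both contributions carry exactly $t^{s/2+1/4}$.

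The main obstacle I anticipate is the bookkeeping of exponents so that \emph{every} term comes out with the \emph{same} power $T^{s/2+1/4}$ and all the $\tau$-integrals converge, which forces the constraint $s>1/2$ (so that the bilinear $L^2$-estimate via $H^s\hookrightarrow L^\infty$ is available and the weight $\tau^{-(1-s)/2}$ is integrable) together with $s<1$ (so that the negative power of $t-\tau$ extracted from the semigroup via Lemma~\ref{calculus} is $<1$ and the Beta integral converges). Keeping track of the low-frequency constant $C_M$, which does not decay in $t-\tau$ and therefore only contributes a harmless $T^{\ge s/2+1/4}$ after integrating the remaining $\tau$-weight, is the other place where care is needed. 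Everything else — the Beta-function evaluation, the elementary inequality $\lan\xi\rangle^{1-s}\lesssim 1+|\xi|^{1-s}$, and the reduction to $L^\infty_\xi$ multiplier bounds — is routine and parallels Lemmas~\ref{lemma1}–\ref{linearks}.
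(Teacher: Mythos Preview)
Your central product estimate is false. You write
\[
\|(\p_x u\,\p_x v)(\tau)\|_{L^2}\lesssim \|\p_x u(\tau)\|_{L^2}\,\|v(\tau)\|_{H^s},
\]
invoking $H^s\hookrightarrow L^\infty$. But H\"older gives $\|(\p_x u)(\p_x v)\|_{L^2}\le \|\p_x u\|_{L^2}\|\p_x v\|_{L^\infty}$, and $\|\p_x v\|_{L^\infty}$ is \emph{not} controlled by $\|v\|_{H^s}$ when $s<3/2$; the embedding you cite applies to $v$, not to $\p_x v$. Your later variant, bounding $\|(\p_x u)(\p_x v)\|_{L^2}$ by $\tau^{-(1-s)}\|u\|_{\X}\|v\|_{\X}$ ``using the time weight on both factors,'' has the same defect: the product of two $L^2$ functions is only in $L^1$, so no amount of time-weight bookkeeping places $\p_x u\,\p_x v$ in $L^2_x$. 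You actually diagnosed the mirror-image problem yourself when you noted that $L^1\hookrightarrow H^{s-1}$ fails for $s>1/2$; the $L^2$ route fails for the same reason.

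The paper's fix is to switch which factor carries $L^2$ on the Fourier side. Instead of putting the multiplier $|\xi|^a e^{(t-\tau)\Phi(\xi)}$ in $L^\infty_\xi$ and the product in $L^2_x$, it puts the multiplier in $L^2_\xi$ via the identity $\||\xi|^\nu e^{-t\xi^2}\|_{L^2_\xi}=c_\nu t^{-\nu/2-1/4}$ (Lemma~\ref{lemma1}) and bounds the convolution $\widehat{\p_x u}\ast\widehat{\p_x v}$ in $L^\infty_\xi$ by Young: $\|\widehat{\p_x u}\ast\widehat{\p_x v}\|_{L^\infty_\xi}\le \|\p_x u\|_{L^2}\|\p_x v\|_{L^2}\le \tau^{-(1-s)}\|u\|_{\X}\|v\|_{\X}$. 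This yields integrands of the form $(t-\tau)^{-\frac{s}{2}-\frac14}\tau^{s-1}$ (for the $\dot H^s$ piece) and $(t-\tau)^{-\frac14}\tau^{s-1}$ (for the $L^2$ and $\p_x$ pieces), whose Beta integrals converge exactly under $1/2<s<1$ and produce the stated power of $T$. Your high/low frequency splitting of the multiplier via Lemma~\ref{calculus} is unnecessary here; the whole point is that the $L^2_\xi$ norm of the kernel already absorbs all the derivatives.
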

\begin{proof}
 Let $0<t\leq T$, therefore by the definition of norms $\X$ we obtain
\begin{equation}
\tau^{\frac{1-s}{2}}\| \p_x u\|_{L^2}\leq \|u\|_{\X} \ \mbox{and} \ \tau^{\frac{1-s}{2}}\|\p_x v\|_{L^2}\leq \|v\|_{\X}.
\end{equation}
Since $s>0$, we see that $(1+\xi^2)^{s/2}\lesssim 1+|\xi|^s$, then
\begin{equation}
\begin{split}\label{first}
\Big \| \int_{0}^{t} E_\mu (t-\tau)(\p_x u \p_x v)(\tau)d\tau\Big\|_{H^s}\leq& \Big \| \int_{0}^{t} E_\mu (t-\tau)(\p_x u \p_x v)(\tau)d\tau\Big\|_{L^2}+\\
&+ \Big\| \int_{0}^{t} E_\mu (t-\tau)(\p_x u \p_x v)(\tau)d\tau\Big\|_{\dot{H}^s}\\
&:=A+B.
\end{split}
\end{equation}

Therefore, by Young's inequality for convolution and identity \eqref{lemanu}
\begin{equation}
\begin{split}\label{choosealpha}
B&\leq \int_{0}^{t} \||\xi|^s e^{-(t-\tau)\Ph}(\p_x u \p_x v (\tau))^{\wedge}(\xi)\|_{L^2_\xi}d\tau\\
&\leq e^{\mu T} \int_{0}^{t} \||\xi|^{s} e^{-(t-\tau)\xi^2}\|_{L^2_\xi}\|\widehat{\p_x u}\ast \widehat{\p_x v} \|_{L^\infty_\xi}d\tau\\
&\lesssim_s e^{\mu T} t^{\frac{s}{2}-\frac14}\int_0^1 (1-\sigma)^{-\frac{s}{2}-\frac{1}{4}}\sigma^{s-1} d\sigma \|u\|_{\X}\|v\|_{\X},
\end{split}
\end{equation}
and
\begin{equation}
\begin{split}\label{A}
A&\leq 
 e^{\mu T} \int_{0}^{t} \|e^{-(t-\tau)\xi^2}\|_{L^2_\xi}\|\widehat{\p_x u}\ast \widehat{\p_x v} \|_{L^\infty_\xi}d\tau\\
&\lesssim e^{\mu T} t^{s-\frac12}\int_0^1 (1-\sigma)^{-\frac{1}{4}}\sigma^{s-1} d\sigma \|u\|_{\X}\|v\|_{\X}.
\end{split}
\end{equation}
In the above, again we used the change of variables $\sigma=\frac{\tau}{t}$.

With respect to second norm in \eqref{X}
\begin{equation}
\begin{split}\label{B}
t^{\frac{1-s}{2}}\Big \| \int_{0}^{t} E_\mu (t-\tau)\p_x(uv)(\tau)d\tau \Big\| & \leq e^{\mu T} t^{\frac{1-s}{2}}\int_0^t \|e^{-(t-\tau)\xi^2}(\p_x u\p_x v)^{\wedge}(\xi,\tau)\|d\tau\\
&\lesssim e^{\mu T} t^{\frac{1-s}{2}}\int_{0}^{t} \|e^{-(t-\tau)\xi^2}\|_{L^2_\xi}\|\widehat{\p_x u}\ast \widehat{\p_x v}\|d\tau\\
&\lesssim e^{\mu T} t^{\frac{1-s}{2}}\int_{0}^{t} (t-\tau)^{-\frac14}\|\p_x u\|\|\p_x v\|d\tau\\
&\lesssim e^{\mu T} t^{\frac{1-s}{2}}\int_{0}^{t} (t-\tau)^{-\frac14}\tau^{s-1}d\tau \|u\|_{\X}\|v\|_{\X}\\
&\lesssim e^{\mu T}T^{\frac{s}{2}+\frac{1}{4}}\int_0^1 (1-\sigma)^{-\frac14}\sigma^{s-1} d\sigma \|u\|_{\X}\|v\|_{\X}.
\end{split}
\end{equation}

Therefore, by \eqref{first}--\eqref{B} we obtain the proof.

\end{proof}
The next result will be useful to obtain regularity of the solutions, in Theorem \ref{localmeio}.
\begin{proposition}\label{prop3meio}
Let $\mu>0,$ $0<T\leq 1,$ $1/2<s<1$ and $\lambda \geq 0$. If $s+\lambda <3/2$ and $\lambda<s-1/2$,
then the application
\begin{equation}
W_\mu: t\in [0,T]\longmapsto \int_{0}^t E_{\mu}(t-\tau)(\p_x u)^2(\tau)d\tau \in H^{s+\lambda}(\R),
\end{equation}
is continuous.
\end{proposition}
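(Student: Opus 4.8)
The plan is to combine a continuity-in-time estimate for the Duhamel operator with the bilinear bound already established, now at the higher regularity level $H^{s+\lambda}$. First I would fix $0 < t_0 \le t \le T$ and write the difference
\begin{equation*}
W_\mu(t) - W_\mu(t_0) = \Big(E_\mu(t-t_0) - I\Big)\int_0^{t_0} E_\mu(t_0-\tau)(\p_x u)^2(\tau)\,d\tau + \int_{t_0}^t E_\mu(t-\tau)(\p_x u)^2(\tau)\,d\tau,
\end{equation*}
so that continuity reduces to two assertions: (i) strong continuity of the semigroup $E_\mu$ on $H^{s+\lambda}$, applied to the fixed element $W_\mu(t_0)\in H^{s+\lambda}$, which gives that the first term tends to $0$ as $t\to t_0$; and (ii) the second term tends to $0$ as $t\to t_0$, which follows once we show the integral $\int_0^t E_\mu(t-\tau)(\p_x u)^2(\tau)\,d\tau$ is well defined in $H^{s+\lambda}$ with a bound that is uniform for $t$ in compact subsets of $(0,T]$ and whose tail $\int_{t_0}^{t}$ is controlled by a power of $t - t_0$.

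The heart of the argument is therefore the $H^{s+\lambda}$ estimate for the Duhamel term, which is a variant of Proposition \ref{pro4} with $s$ replaced by $s+\lambda$. I would split the $H^{s+\lambda}$ norm into its $L^2$ and $\dot H^{s+\lambda}$ parts as in \eqref{first}; for the homogeneous part, Young's inequality for convolutions together with \eqref{lemanu} gives
\begin{equation*}
\Big\|\int_a^t E_\mu(t-\tau)(\p_x u)^2(\tau)\,d\tau\Big\|_{\dot H^{s+\lambda}} \lesssim e^{\mu T}\int_a^t (t-\tau)^{-\frac{s+\lambda}{2}-\frac14}\|\widehat{\p_x u}\ast\widehat{\p_x u}\|_{L^\infty_\xi}\,d\tau \lesssim e^{\mu T}\int_a^t (t-\tau)^{-\frac{s+\lambda}{2}-\frac14}\tau^{s-1}\,d\tau\,\|u\|_{\X}^2,
\end{equation*}
where I bounded $\|\widehat{\p_x u}\ast\widehat{\p_x u}\|_{L^\infty_\xi}\le \|\p_x u\|_{L^2}^2 \le \tau^{s-1}\|u\|_{\X}^2$. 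The condition $s+\lambda < 3/2$ ensures $\frac{s+\lambda}{2}+\frac14 < 1$ so that the singularity at $\tau = t$ is integrable, while $s > 1/2$ handles the singularity at $\tau = 0$; the combined exponent of the resulting Beta-type integral, after the substitution $\sigma = \tau/t$, is $t^{s - \frac{s+\lambda}{2} + \frac14} = t^{\frac{s-\lambda}{2}+\frac14}$, which has a positive exponent precisely because $\lambda < s - 1/2 < s + 1/2$, and in fact this is exactly what lets the tail $\int_{t_0}^{t}$ be estimated by $(t-t_0)^{\delta}$ for some $\delta > 0$ after keeping careful track of where the mass of the integral sits. The $L^2$ part is easier and requires only $s > 1/2$.

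The main obstacle I expect is the bookkeeping for the tail term $\int_{t_0}^{t}$: one wants not merely boundedness but a bound that vanishes as $t \downarrow t_0$, uniformly while $t_0$ stays away from $0$. The cleanest route is to estimate
\begin{equation*}
\Big\|\int_{t_0}^t E_\mu(t-\tau)(\p_x u)^2(\tau)\,d\tau\Big\|_{H^{s+\lambda}} \lesssim e^{\mu T} t_0^{s-1}\|u\|_{\X}^2 \int_{t_0}^t \big((t-\tau)^{-\frac{s+\lambda}{2}-\frac14} + (t-\tau)^{-\frac14}\big)\,d\tau,
\end{equation*}
using $\tau^{s-1}\le t_0^{s-1}$ on $[t_0,t]$; since both exponents are strictly less than $1$ in absolute value, the remaining $\tau$-integral is bounded by $C\big((t-t_0)^{\frac34-\frac{s+\lambda}{2}} + (t-t_0)^{\frac34}\big) \to 0$. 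Combined with point (i) — strong continuity of $E_\mu$ on $H^{s+\lambda}$, which follows from $\|(e^{(t-t_0)\Phi(\xi)}-1)\lan\xi\rangle^{s+\lambda}\widehat{v}\|_{L^2_\xi}\to 0$ by dominated convergence using $|e^{(t-t_0)\Phi(\xi)}-1|\le 2e^{\mu T}$ and $e^{(t-t_0)\Phi(\xi)}\to 1$ pointwise — this yields continuity of $W_\mu$ on $[t_0,T]$ for every $t_0>0$, hence on $(0,T]$; continuity at $t=0$ (where $W_\mu(0)=0$) is the $t_0\to 0$ case of the bound on $\int_0^t$, which gives $\|W_\mu(t)\|_{H^{s+\lambda}}\lesssim t^{\frac{s-\lambda}{2}+\frac14}\to 0$.
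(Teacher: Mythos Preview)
Your argument is correct and tracks the paper's proof closely: the decomposition into a ``semigroup increment on $[0,t_0]$'' plus a ``tail on $[t_0,t]$'' is exactly the paper's $\psi_1+\psi_2$, and your tail estimate is the paper's estimate on $\psi_2$. The one genuine difference is your treatment of the first piece: you rewrite it as $(E_\mu(t-t_0)-I)W_\mu(t_0)$ and appeal to strong continuity of $E_\mu$ on $H^{s+\lambda}$ (legitimate once the $H^{s+\lambda}$ bound on $W_\mu(t_0)$ is in hand), whereas the paper keeps the $\tau$-integral intact and runs dominated convergence first in $\xi$ and then in $\tau$. Your packaging is a bit cleaner; the paper's is more explicit but amounts to the same thing.

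Two small corrections. First, after the substitution $\sigma=\tau/t$ the exponent on $t$ in the $\dot H^{s+\lambda}$ bound is $\frac{s-\lambda}{2}-\frac14$, not $\frac{s-\lambda}{2}+\frac14$; its positivity is exactly the hypothesis $\lambda<s-\tfrac12$, which you do invoke correctly. Second, for the tail you use $\tau^{s-1}\le t_0^{s-1}$, which forces a separate argument at $t_0=0$; the paper instead uses $\tau=t_0+\sigma(t-t_0)\ge\sigma(t-t_0)$, hence $\tau^{s-1}\le\sigma^{s-1}(t-t_0)^{s-1}$, which handles $t_0=0$ uniformly and yields the vanishing power of $t-t_0$ in one stroke.
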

\begin{proof}
Fixed $0\leq t_0 <t \leq T$, then

\begin{equation}
\begin{split}\label{decompmeio}
W_\mu(t)-W_\mu(t_0)=&\int_{0}^{t_0}(E_\mu (t-\tau)-E_\mu (t_0-\tau))(\p_x u)^2 (\tau)d\tau+\\
&+\int_{t_0}^{t}E_\mu (t -\tau)(\p_x u)^2 (\tau)d\tau\\
&:=\psi_1(t,t_0)+\psi_2(t,t_0).
\end{split}
\end{equation}

Case a): $s+\lambda\geq 0$. In view of $(1+\xi^2)^{\frac{s+\lambda}{2}}\lesssim 1+|\xi|^{s+\lambda}$, we obtain

\begin{equation}
\begin{split}
\|\psi_2(t,t_0)\|_{H^{s+\lambda}}&\leq \int_{t_0}^{t}\|E_\mu (t-\tau)(\p_x u)^2 (\tau)\|d\tau+\int_{t_0}^{t}\|E_\mu (t-\tau)(\p_x u)^2 (\tau)\|_{\dot{H}^{s+\lambda}}d\tau\\
&:=A(t,t_0)+B(t,t_0).
\end{split}
\end{equation}
Then

\begin{equation}
\begin{split}
A(t,t_0)&\leq e^{\mu T} \int_{t_0}^{t} \|e^{-(t-\tau)\xi^2}\|_{L^2_\xi} \|\widehat{\p_x u}\ast \widehat{\p_x u}\|d\tau\\
          &\lesssim e^{\mu T} \int_{t_0}^{t}(t-\tau)^{-1/4}\tau^{s-1} d\tau\|u\|^2_{\X}\\
            &\lesssim (t-t_0)^{s-1/4}e^{\mu T} \int_{0}^{1}(1-\sigma)^{-1/4}\sigma^{s-1} d\sigma\|u\|^2_{\X}\to 0,
\end{split}
\end{equation}
with $t\to t_0$.

As for the other integral
\begin{equation}
\begin{split}\label{samesteps}
B(t,t_0)&\lesssim e^{\mu T} \int_{t_0}^{t} \||\xi|^{s+\lambda}e^{-(t-\tau)\xi^2}\|_{L^2_\xi} \|\widehat{\p_x u}\ast \widehat{\p_x u}\|d\tau\\
          &\lesssim e^{\mu T} \int_{t_0}^{t}(t-\tau)^{-\frac{s+\lambda}{2}-\frac{1}{4}}\tau^{s-1} d\tau\|u\|^2_{\X}\\
            &\lesssim e^{\mu T} (t-t_0)^{\frac{s-\lambda}{2}+\frac14}\int_{0}^{1}(t-\tau)^{-\frac{s+\lambda}{2}-\frac14}\tau^{s-1} d\tau\|u\|^2_{\X}\\
            &\to 0,
\end{split}
\end{equation}
with $t\to t_0$.
Where, in the above arguments we used the change of variables $\tau=t_0+\sigma(t-t_0)$  and the inequality $\tau^s<\sigma^s(t-t_0)^s$.


With respect to the first integral in \eqref{decompmeio}
\begin{equation}
\begin{split}\label{intcmeio}
\|\psi_1(t,t_0)\|_{H^{s+\lambda}}\leq & \|\psi_1(t,t_0)\| + \int_{t_0}^{t}\||\xi|^{s+\lambda}(E_\mu (t -\tau)-E_\mu (t_0-\tau))(\p_x u)^2 (\tau)\| d\tau \\
&:= E(t,t_0)+F(t,t_0).
\end{split}
\end{equation}
Then from way analogous to the above case $E(t,t_0)\to 0$, with $t\to t_0$.

With respect to second integral

$$F(t,t_0)\lesssim_{\mu,T} \Big ( \int_{0}^{t_{0}} \|h(t,t_0,\tau,\xi)\|\tau^{s-1} d\tau \Big)\|u\|_{\X}^2,$$
where
$$h(t,t_0,\tau,\xi):=|\xi|^{s+\lambda}\Big(e^{-(t-\tau)\xi^2}-e^{-(t_0-\tau)\xi^2}\Big)
.$$
Note that the function $g$ above converges to zero, with $t$ goes to $t_0$, for all $\xi \in \R$. We also have by the inequality $t-\tau>t_0-\tau$ that $|h(t,t_0,\tau,\xi)|\leq 2 e^{-(t_0-\tau)\xi^2}|\xi|^{s+\lambda} \in L^2_\xi.$
Then by the Lebesgue dominated convergence theorem
$$\|h\|_{L^2_\xi}\to 0, \ \mbox{with}\ t\to t_0.$$
Moreover
\begin{equation}
\begin{split}
\|h(t,t_0,\tau,\xi)\|_{L^2_\xi}\tau^{s-1} &\leq 2 \||\xi|^{s+\lambda}e^{-(t_0-\tau)\xi^2}\|\tau^{s-1}\\
                    &\lesssim (t_0-\tau)^{-\frac{s+\lambda}{2}-\frac{1}{4}}\tau^{s-1}\in L^1_\tau (0,t_0).
\end{split}
\end{equation}
A new application of the Lebesgue dominated convergence theorem gives us
$$F(t,t_0)\to 0, \ \mbox{with} \ t\to t_0.$$

This concludes the proof of case a).

Case b): $s+\lambda<0$. Since $\dot{H}^{s+\lambda}\hookrightarrow H^{s+\lambda}$, we obtain

\begin{equation}
\begin{split}
\|\psi_2(t,t_0)\|_{H^{s+\lambda}}
          &\lesssim e^{\mu T} \int_{t_0}^{t} \||\xi|^{s+\lambda} e^{-(t-\tau)\xi^2}\|_{L^2_\xi} \|\widehat{\p_x u}\ast \widehat{\p_x v}\|_{L^\infty_\xi}d\tau\\
          &\lesssim e^{\mu T} \int_{t_0}^{t}\||\xi|^{s+\lambda} e^{-(t-\tau)\xi^2}\|_{L^2_\xi}\frac{d\tau}{\tau^{1-s}}\|u\|^2_{\X}\\
          &\lesssim_{s,\lambda} e^{\mu T} \int_{t_0}^{t}(t-\tau)^{-\frac{s+\lambda}{2}-\frac14}\tau^{s-1} d\tau\|u\|^2_{\X}\\
          &\lesssim_{s,\lambda} e^{\mu T} (t-t_0)^{\frac{s-\lambda}{2}-\frac14}\int_{0}^{1}(1-\sigma)^{-\frac{s+\lambda}{2}-\frac14}\sigma^{s-1} d\sigma\|u\|^2_{\X}\\
          &\to 0,
\end{split}
\end{equation}
with $t\to t_0. $

Finally, we see that $\psi_1(t,t_0)$ converges to zero in $H^{s+\lambda}$-norm, by way analogous to the term $F(t,t_0)$, in \eqref{intcmeio}.
Therefore, the proof of proposition is finalized.
\end{proof}

\begin{remark}\label{s'}
Let $s'>s>1/2$, then modifying the space $\mathcal{X}_{T}^{s'}$ by
$$\tilde{\mathcal{X}}_{T}^{s'}=\{u\in \mathcal{X}_{T}^{s'}; \|u\|_{\tilde{\mathcal{X}}_{T}^{s'}}<\infty\}$$
with
$$\|u\|_{\tilde{\mathcal{X}}_{T}^{s'}}=\|u\|_{\mathcal{X}_{T}^{s'}}+\sup_{t\in [0,T]}\Big(t^{s/2}\|J^{s'-s}u(t)\|_{L^2}\Big)$$
and using the fact that
$$(1+\xi^2)^{s'/2}\lesssim (1+\xi^2)^{s/2}(1+\xi_{1}^2)^{(s'-s)/2}+(1+\xi^2)^{s/2}\big(1+(\xi-\xi_1)^2\big)^{(s'-s)/2},$$
we obtain, from way similar to the Proposition \ref{pro4}
$$\Big\|\int_0^t E_\mu(t-\tau)(\p_x u \p_x v)(\tau)d\tau\Big\|_{\tilde{\mathcal{X}}_{T}^{s'}}\lesssim_{s} e^{\mu T}T^{\delta(s)} (\|u\|_{\tilde{\mathcal{X}}_{T}^{s'}}\|v\|_{\mathcal{X}_{T}^{s}}+\|u\|_{\mathcal{X}_{T}^{s}}\|v\|_{\tilde{\mathcal{X}}_{T}^{s'}}).$$
\end{remark}

\section{Proof of Theorems \ref{localmeio}--\ref{global}}
\begin{proof}[Proof of Theorem \ref{localmeio}]
Let $\mu>0$ and $1/2<s<1$. Our strategy is to show that the operator $F_\mu$ given by \eqref{integral} is a contraction in some closed ball in $\Y$. In fact, by \eqref{integral}
\begin{equation}\label{closed}
\begin{split}
\|F_\mu (t)\|_{\Y}& \les \|E_\mu(t)\phi\|_{\Y}+\Big \| \int_{0}^{t} E_\mu (t-\tau)(\p_x u)^2 (\tau)d\tau\Big\|_{\Y} \\
                   & \les C_{s,\mu}(\|\phi\|_{H^s}+T^{\delta(s)} \|u\|^2_{\Y}),
\end{split}
\end{equation}
and

\begin{equation}\label{cont}
\begin{split}
\|F_\mu (u)-F_\mu(v)\|_{\Y}&\leq \|\int_{0}^{t} E_\mu (t-\tau)((\p_x u)^2- (\p_x v)^2)(\tau) d\tau\|_{\Y}\\
&=\|\int_{0}^{t} E_\mu (t-\tau)(\p_x(u-v)\p_x(u+v))(\tau) d\tau\|_{\Y}\\
&\leq C_{s,\mu}T^{\delta(s)} \|u-v\|_{\Y}\|u+v\|_{\Y},
\end{split}
\end{equation}
for all $u,v\in \Y$ and $0<T\leq 1$.

Therefore, given $R=2C_{s,\mu}\|\phi\|_{H^s}$ we define
\begin{equation}\label{ball}
\Y (R)=\{u\in \Y; \|u\|_{\Y}\leq R\}.
\end{equation}

Then taking
\begin{equation}\label{T}
0<T<\min \Big \{ (4C_{s,\mu}^2 \|\phi\|_{H^s})^{-1/\delta(s)},1  \Big \},
\end{equation}
the estimates \eqref{closed} and \eqref{cont} implies that $F_\mu$ is a contraction on the $\Y (R)$. Then by the Banach fixed point theorem, there exists a unique solution $u$ of the integral equation \eqref{integral} in $\Y (R)$. By the Proposition \ref{prop3meio} follows that $u\in C([0,T];H^s(\R))$. The uniqueness in whole space $\Y$ and the smoothness of the flow-map solution follows by know arguments, see for example \cite{AlarOtter}, \cite{pastran} and \cite{HBIS}.

Let $s'>s$, then a similar contraction argument using the norm $\tilde{\mathcal{X}}_{T}^{s'}$, defined in Remark \ref{s'}, shows that the solution with initial data $\phi \in H^{s'}$ is defined on $[0,T]$ with $T=T(\|\phi\|_{H^s})$.

With respect to regularity, we note that $t\in (0,T]\longmapsto E_\mu (t)\phi \in H^{\infty}(\R)$ is continuous with respect to the topology of $H^{\infty}$, see \cite{AlarOtter} and \cite{cunha}. From the Proposition \ref{prop3meio} there exists $\lambda>0$ such that  $V_\mu \in C([0,T];H^{s+\lambda} (\R)),$ thus
$$u\in C((0,T]; H^{s+\lambda}(\R).$$

Therefore, by a well known bootstrapping argument, using the uniqueness result and the fact that $T$ only depends on the $H^s$-norm of the initial data, we obtain
                      $$u\in C((0,T];H^\infty(\R)).$$

\end{proof}

\begin{proof}[Proof of Theorem \ref{global}]

Let $u$ be the local solution given by Theorem \ref{localmeio}. In view of $u\in C((0,T]; H^{\infty})$, we only need an \textit{a priori} estimate in $H^1$. For this, putting $w=\partial_{x}u,$ we get the following
\begin{eqnarray}\label{3.9}
\left\{\begin{array} {lcc}
\partial_{t}w-\partial_{x}^{2}w-w\partial_{x}w-\mu(1-\partial_{x}^{2})^{-1/2}w=0\\
w(x,0)=\phi'(x).
\end{array} \right.
\end{eqnarray}
Multiplying \eqref{3.9} by $w$ and integrating over the real line we obtain
$$\dfrac{d}{dt}\|w\|^{2}=-2\|\partial_{x}w\|^{2}+2\mu\|w\|_{H^{-1/2}}^{2}
\leq 2\mu\|w\|^{2}.$$
Then by the Gronwall's Lemma
\begin{equation}\label{3.12}
\|\partial_{x}u\|^{2}\leq \|\phi'\|^{2}\,\exp\left[2\mu\int_{0}^{t}dt'\right]\leq e^{2\mu T}\,\|\phi'\|^{2}.
\end{equation}

In the following, multiplying  \eqref{ks} by $u$ and integrating over $\R$ we get
\begin{equation}
\begin{split}\label{3.20}
\dfrac{d}{dt}\|u\|^{2}&= \ 2\langle u,\partial_{x}^{2}u\rangle+\langle u,(\partial_{x}u)^{2}\rangle+2\mu\langle u,(1-\partial_{x}^{2})^{-1/2}u\rangle\\
&\leq \ -2\|\partial_{x}u\|^{2}+\|u\|_{L^{\infty}}\|\partial_{x}u\|^{2}+2\mu\|u\|^{2}\\
&\lesssim  \ \|u\|^{\frac{1}{2}}\|\partial_{x}u\|^{\frac{5}{2}}+2\mu\|u\|^{2}\\
&\lesssim_{\mu,T}  (\|\phi'\|^{\frac{10}{3}}+\|u\|^{2}),
\end{split}
\end{equation}
 where above, we use respectively, the Gagliardo-Nirenberg's inequality, and the Young's inequality. Therefore, an application of Gronwall's Lemma in \eqref{3.20} give us the desired result. This finish the proof.

\end{proof}

\section{Convergence of solutions when $\mu \downarrow 0$}
In this section we study the behavior of the solutions of IVP \eqref{ks}, when $\mu$ goes to zero. In the following, we define by $u_\mu$, the solution of the IVP \eqref{ks} constructed in Theorem \ref{ks}, on parameter $\mu>0$ and defined in the interval $[0,T]$. Recall that by the proof of Theorem \ref{ks}, $T$ is independent of $\mu$. Here we are using arguments similar to \cite{HBIS} (see also \cite{didier}).

\begin{theorem}\label{themu}
Let $1/2<s<1$, $\phi \in H^s(\R)$ and $\mu>0$. If $u_\mu$ is the solution defined as above, for $u_\mu(0)=\phi$, then
\begin{equation}
\lim_{\mu\downarrow 0}\sup_{t\in [0,T]}\|u_\mu-u\|_{H^s}=0,
\end{equation}
where $u$ is the solution of \eqref{ks}, on parameter $\mu=0$, with $u(0)=u_0(0)=\phi$.
\end{theorem}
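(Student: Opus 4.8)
The plan is to compare $u_\mu$ and $u=u_0$ through the integral equation \eqref{integral}, exploiting that both solve a fixed-point problem in the same ball $\mathcal{X}_T^s(R)$ with $T$ and $R$ depending only on $\|\phi\|_{H^s}$ (uniform in $\mu$, by the proof of Theorem \ref{localmeio}). Writing $w_\mu=u_\mu-u$, one has
$$w_\mu(t)=\big(E_\mu(t)-E_0(t)\big)\phi+\tfrac12\int_0^t E_\mu(t-\tau)\big((\p_x u_\mu)^2-(\p_x u)^2\big)(\tau)\,d\tau+\tfrac12\int_0^t\big(E_\mu(t-\tau)-E_0(t-\tau)\big)(\p_x u)^2(\tau)\,d\tau.$$
For the middle term one uses the bilinear estimate of Proposition \ref{pro4} (applied with the semigroup $E_\mu$, $\mu>0$ fixed, and constants bounded by $e^{\mu T}\le e^{T}$ for small $\mu$) to get a bound $C\,T^{\delta(s)}\|w_\mu\|_{\mathcal{X}_T^s}(\|u_\mu\|_{\mathcal{X}_T^s}+\|u\|_{\mathcal{X}_T^s})\le C\,R\,T^{\delta(s)}\|w_\mu\|_{\mathcal{X}_T^s}$; since $T$ was chosen in \eqref{T} so that $F_\mu$ is a contraction, $C\,R\,T^{\delta(s)}\le\tfrac12$, so this term is absorbed into the left side. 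Thus it remains to show the two "linear'' error terms go to zero in $\mathcal{X}_T^s$ as $\mu\downarrow0$.

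The heart of the matter is the estimate $\|(E_\mu(t)-E_0(t))\psi\|_{\mathcal{X}_T^s}\to0$. By Plancherel this reduces to controlling, for the first part of the $\mathcal{X}_T^s$-norm,
$$\Big\|\big(e^{t\Phi(\xi)}-e^{-t\xi^2}\big)\lan\xi\rangle^s\hat\psi\Big\|_{L^2_\xi}=\Big\|e^{-t\xi^2}\big(e^{t\mu\lan\xi\rangle^{-1}}-1\big)\lan\xi\rangle^s\hat\psi\Big\|_{L^2_\xi},$$
and similarly, for the second part, $t^{(1-s)/2}\|\xi e^{-t\xi^2}(e^{t\mu\lan\xi\rangle^{-1}}-1)\lan\xi\rangle^{-s}\lan\xi\rangle^s\hat\psi\|_{L^2_\xi}$. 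The pointwise bound $0\le e^{t\mu\lan\xi\rangle^{-1}}-1\le e^{\mu T}t\mu\lan\xi\rangle^{-1}\le e^{\mu T}\mu T$ together with the $L^\infty_\xi$-type estimates already used in Lemma \ref{linearks} (namely $\|e^{-t\xi^2}\|_{L^\infty}\le1$ and $t^{(1-s)/2}\|\xi\lan\xi\rangle^{-s}e^{-t\xi^2}\|_{L^\infty}\le C_{s,M}$) yields $\|(E_\mu(t)-E_0(t))\psi\|_{\mathcal{X}_T^s}\le C_{s,M}\,\mu T\, e^{\mu T}\|\psi\|_{H^s}\to0$. Applying this with $\psi=\phi$ handles the first error term directly. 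For the third term, one puts the $E_\mu-E_0$ difference inside and runs the same computation as in Proposition \ref{pro4}, except that at the stage where $\|e^{-(t-\tau)\xi^2}\|_{L^2_\xi}$ (resp. $\||\xi|^s e^{-(t-\tau)\xi^2}\|_{L^2_\xi}$) appeared, one instead estimates $\|(e^{(t-\tau)\Phi}-e^{-(t-\tau)\xi^2})(\cdot)\|_{L^2_\xi}$; extracting the factor $e^{(t-\tau)\mu\lan\xi\rangle^{-1}}-1\le e^{\mu T}\mu T$ in $L^\infty_\xi$ gives an extra multiplicative $\mu$, so the third term is bounded by $C_{s}\,\mu\,e^{\mu T}T^{\delta(s)}\|u\|_{\mathcal{X}_T^s}^2\le C_s\,\mu\,e^{\mu T}T^{\delta(s)}R^2\to0$.

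Combining the three estimates gives $\|w_\mu\|_{\mathcal{X}_T^s}\le 2\big(C_{s,M}\mu T e^{\mu T}\|\phi\|_{H^s}+C_s\mu e^{\mu T}T^{\delta(s)}R^2\big)\to0$ as $\mu\downarrow0$, and since $\mathcal{X}_T^s\hookrightarrow C([0,T];H^s)$ this yields $\sup_{t\in[0,T]}\|u_\mu-u\|_{H^s}\to0$, which is the claim. The one point requiring a little care — the "main obstacle'' — is making sure all constants are genuinely uniform in $\mu$ as $\mu\downarrow0$: this is where one uses $e^{\mu T}\le e^{T}$, the fact that $M$ in Lemma \ref{M1} can be chosen uniformly for $0<\mu\le1$ (indeed $\Phi_\mu(\xi)/\xi^2\to-1$ uniformly on the range of small $\mu$), and crucially that the existence time $T$ and radius $R$ from Theorem \ref{localmeio} depend only on $\|\phi\|_{H^s}$ and not on $\mu$, so that the same ball and the same contraction constant $\le\tfrac12$ work simultaneously for all small $\mu$ and for $\mu=0$. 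Once that uniformity is in hand, the argument is a routine difference-of-fixed-points estimate.
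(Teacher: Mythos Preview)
Your argument is correct and follows the same route as the paper: write $w_\mu=u_\mu-u$ as the sum of the three pieces $\alpha_1+\alpha_2+\alpha_3$, absorb the bilinear term $\alpha_2$ via Proposition~\ref{pro4} and the uniform contraction constant, and then show the two linear error terms vanish. The only differences are cosmetic: the paper handles $\alpha_1$ and $\alpha_3$ by dominated convergence and afterwards iterates over subintervals to cover all of $[0,T]$, whereas you extract the explicit pointwise bound $0\le e^{t\mu\langle\xi\rangle^{-1}}-1\le \mu T e^{\mu T}$, which gives a quantitative $O(\mu)$ rate and lets you work directly on the full interval once the constants (including $M$ from Lemma~\ref{M1} and the radius $R$) are observed to be uniform for $0<\mu\le 1$.
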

\begin{proof}
Putting $E:=E_0$, after straightforward computations, follows that $w:=u_\mu-u$ satisfies the integral equation
\begin{equation}
\begin{split}\label{intmu}
w=&(E_\mu (t)-E(t))\phi+\frac12\int_0^t E_\mu(t-\tau)[\p_xu_\mu\p_x w+\p_x u\p_x w]d\tau\\
  &+\frac12 \int_0^t (E_\mu(t-\tau)-E(t-\tau))(\p_x u)^2 d\tau\\
  &:=\alpha_1+\alpha_2+\alpha_3.
\end{split}
\end{equation}

Let $u_\mu$ solutions constructed in Theorem \ref{ks}, on interval $[0,T_{0}]$, such that $$\|u_\mu\|_{\mathcal{X}^s_{T_{0}}}\leq R, \ \mbox{where}\ 0<T_{0}\leq \min\{1,T\}.$$
In view of inequality
\begin{equation}\label{conv}
\sup_{t\in [0,T_{0}]}\|u_\mu-u\|_{H^s}\leq \|w\|_{\mathcal{X}^s_{T_{0}}},
\end{equation}
 is enough to examine the convergence on spaces $\mathcal{X}^s_{T_{0}}$.
Then, by using Proposition \ref{pro4}
\begin{equation}
\begin{split}
\|\alpha_2\|_{\mathcal{X}^s_{T_{0}}}&\lesssim e^{\mu T}T_{0}^{\delta(s)}(\|u_\mu\|_{\mathcal{X}^s_{T_{0}}}\|w\|_{\mathcal{X}^s_{T_{0}}}+\|u\|_{\mathcal{X}^s_{T_{0}}}\|w\|_{\mathcal{X}^s_{T_{0}}})\\
&\lesssim 2 e^{\mu T}T_{0}^{\delta(s)}R \|w\|_{\mathcal{X}^s_{T_{0}}}.
\end{split}
\end{equation}
Taking $T_{0}$ such that $2e^{\mu T}T_{0}^{\delta(s)}R<1/2$, follows by \eqref{intmu}
\begin{equation}
\begin{split}\label{varphi2}
\|w\|_{\mathcal{X}^s_{T_{0}}}&\leq \|\alpha_1\|_{\mathcal{X}^s_{T_{0}}}+\|\alpha_2\|_{\mathcal{X}^s_{T_{0}}}+\|\alpha_3\|_{\mathcal{X}^s_{T_{0}}}\\
                               &\leq \|\alpha_1\|_{\mathcal{X}^s_{T_{0}}}+\frac12 \|w\|_{\mathcal{X}^s_{T_{0}}}+\|\alpha_3\|_{\mathcal{X}^s_{T_{0}}}.
\end{split}
\end{equation}
Then
\begin{equation}\label{varphi}
\|w\|_{\mathcal{X}^s_{T_{0}}}\leq 2(\|\alpha_1\|_{\mathcal{X}^s_{T_{0}}}+\|\alpha_3\|_{\mathcal{X}^s_{T_{0}}}).
\end{equation}
By the last inequality its enough study the limit on $\varphi_1$ and $\varphi_3$. For this, we observe that by the definition of the  ${\mathcal{X}^s_{T_{0}}}$-norms and the Lebesgue dominated convergence theorem, follows that
\begin{equation}\label{varphi1}
\|\alpha_1\|_{\mathcal{X}^s_{T_{0}}}\to 0, \ \mbox{with}\ \mu\downarrow 0.
\end{equation}
About $\alpha_3$, again by the Lebesgue dominated convergence theorem and using the same ideas as in the Propositions \ref{pro4} and \ref{prop3meio}
\begin{equation}\label{varphi3}
\|\alpha_3\|_{\mathcal{X}^s_{T_{0}}}\to 0, \ \mbox{with}\ \mu\downarrow 0.
\end{equation}
Therefore, by \eqref{conv}--\eqref{varphi3} we conclude that
\begin{equation}
\sup_{t\in [0,T_{0}]}\|u_\mu-u\|_{H^s}\to 0, \ \mbox{with}\ \mu \downarrow 0.
\end{equation}
To conclude, we can use an interactive process to extend the solution for all interval $[0,T]$. This finish the proof.
\end{proof}
\begin{remark}\label{0}
By a modification of the space $\mathcal{X}^s_{T}$ we can show the existence of solutions to the IVP \eqref{ks}, when $s>0$. In this case, the uniqueness of IVP \eqref{ks} fail, once that's in \cite{Dix}, the author obtained the non-uniqueness in the initial value problem for the Burgers' equation, where $s<-1/2$.
\end{remark}

\section{Ill-Posedness}
In this section we use analogous arguments contained in \cite{didier} (see also \cite{pastran}, \cite{pan2}, \cite{amin} and \cite{EP2}).
\begin{theorem}\label{ill5}
Let $s<1/2$ and $T>0.$ Then there no exists a space $\Y$ continuously embedded in $C([0,T];H^s(\R))$ such that
\begin{equation}\label{ill1}
\|E_\mu (t)\phi\|_{\Y}\lesssim \|\phi\|_{H^s}, \ \forall \phi\in H^s(\R)
\end{equation}
and
\begin{equation}\label{ill2}
\Big\| \int_{0}^{t} E_\mu (t-\tau)b(u,v)(\tau)d\tau\Big\|_{\Y}\lesssim\|u\|_{\Y}\|v\|_{\Y}, \forall u,v\in \Y,
\end{equation}
where
\begin{equation}
b(u,v)=\frac{1}{2}\p_x u \p_x v.
\end{equation}
\end{theorem}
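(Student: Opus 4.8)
The plan is to argue by contradiction: assume such a space $\Y$ exists satisfying \eqref{ill1} and \eqref{ill2}, and derive a contradiction by testing the bilinear estimate on a carefully chosen one-parameter family of high-frequency data. First I would take $\phi = \phi_N$ to be a frequency-localized profile concentrated near $|\xi| \sim N$ with $\|\phi_N\|_{H^s} \sim 1$; a convenient choice is $\widehat{\phi_N} = N^{-s}\chi_{I_N}$ where $I_N$ is an interval of unit length centered at $N$ (or the union of two such intervals at $\pm N$ to keep things real-valued). The key point is that for such data the linear flow is essentially an exponentially decaying multiplier $e^{t\Phi(\xi)}$ with $\Phi(\xi) \sim -N^2$ on the relevant frequencies, so the Duhamel term can be computed fairly explicitly.

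The core computation is to estimate from below the $H^s$-norm (at a fixed time $t$, using the continuous embedding $\Y \hookrightarrow C([0,T];H^s)$) of
\begin{equation}
u_2(t) := \int_0^t E_\mu(t-\tau)\, b\bigl(E_\mu(\tau)\phi_N, E_\mu(\tau)\phi_N\bigr)(\tau)\,d\tau,
\end{equation}
and compare it with what \eqref{ill1}--\eqref{ill2} would force, namely $\|u_2(t)\|_{H^s} \lesssim \|E_\mu(t)\phi_N\|_\Y^2 \lesssim \|\phi_N\|_{H^s}^2 \sim 1$. On the Fourier side, $b(E_\mu(\tau)\phi_N, E_\mu(\tau)\phi_N)$ has symbol proportional to $\int \xi_1(\xi-\xi_1) e^{\tau\Phi(\xi_1)}e^{\tau\Phi(\xi-\xi_1)}\widehat{\phi_N}(\xi_1)\widehat{\phi_N}(\xi-\xi_1)\,d\xi_1$. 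The two input frequencies both live near $+N$ (say), so $\xi_1(\xi-\xi_1) \sim N^2$, while the output frequency $\xi = \xi_1 + (\xi-\xi_1)$ is localized near $2N$; crucially there is \emph{no} dispersive cancellation here — the real parts $\Phi(\xi_1), \Phi(\xi-\xi_1)$ are negative but there is no oscillation, so $|u_2(t)|$-hat near $\xi \sim 2N$ is of size roughly $N^2 \cdot N^{-2s} \cdot \int_0^t e^{(t-\tau)\Phi(2N)} e^{\tau(\Phi(N)+\Phi(N))}\,d\tau$ times the length of the overlap interval. Since $\Phi(2N), 2\Phi(N) \sim -N^2$, the time integral is $\sim N^{-2}$ for $t$ bounded below, so the surviving lower bound on $\widehat{u_2}(t,\xi)$ near $\xi \sim 2N$ is $\gtrsim N^{-2s}$ on a set of measure $\sim 1$. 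Weighting by $\langle\xi\rangle^s \sim N^s$ gives $\|u_2(t)\|_{H^s} \gtrsim N^{s-2s} = N^{-s}$. Hmm — this decays, so I must instead \emph{sharpen} the bookkeeping: I should track that the output is spread over frequencies $\sim 2N$ so the $H^s$ weight is $N^s$, against the $L^2$-mass; the honest count gives $\|u_2(t)\|_{H^s} \sim N^{s}\cdot N^{-2s}\cdot N^{2}\cdot N^{-2}\cdot N^{0}$ before the time-integral decay is correctly assessed. I would redo this step carefully: the point is that the exponential decay factor contributes only a \emph{bounded} (in $N$) constant once one uses $\int_0^t e^{-c(t-\tau)N^2}e^{-cN^2\tau}d\tau \sim t e^{-cN^2 t}$... which is in fact \emph{exponentially small}, not polynomially small.

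This exposes the real subtlety: because KS is \emph{parabolic}, high frequencies are exponentially damped in the linear flow, so a single-scale high-frequency ansatz kills $u_2$ entirely. The fix — and what I expect to be the genuine heart of the argument — is to use a \emph{two-scale} (resonant interaction) ansatz: take $\widehat{\phi_N}$ supported on two bumps, one of unit width near a large frequency $N$ and one of unit width near a small frequency (or near a frequency $M \ll N$), arranged so that in the product the large frequencies \emph{cancel}: $\xi_1 \sim N$, $\xi - \xi_1 \sim -N$, so the output frequency $\xi$ is $O(1)$ (or $O(M)$) while the factor $\xi_1(\xi-\xi_1) \sim -N^2$ is huge. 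Then $e^{\tau\Phi(\xi_1)}e^{\tau\Phi(\xi-\xi_1)}$ is still tiny, but the outer propagator $e^{(t-\tau)\Phi(\xi)}$ with $\xi = O(1)$ is $O(1)$, and — this is the crux — the $\tau$-integral $\int_0^t \xi_1(\xi-\xi_1) e^{2\tau\Phi(N)}\,d\tau \sim N^2 \cdot N^{-2} = O(1)$ since $\int_0^t e^{-cN^2\tau}d\tau \sim N^{-2}$, which exactly balances the $N^2$. So $\widehat{u_2}(t,\xi)$ near $\xi = O(1)$ has size $\sim N^{-2s'}$ where $N^{-s'}$ normalizes the large-frequency bump in $H^s$... again one must balance the $H^s$-normalization of the two bumps. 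I would set $\|\phi_N\|_{H^s}\sim 1$ by putting the unit-mass on the high bump: $\widehat{\phi_N} = N^{-s}\chi_{I_N} + \chi_{J}$ with $J$ a fixed unit interval near the origin; then the resonant piece of $u_2$ has $H^s$-norm $\gtrsim N^{-s}\cdot N^2 \cdot N^{-2}\cdot 1 = N^{-s}$ — still decaying. Let me reconsider: the correct resonant configuration should have \emph{both} large bumps carry the small $H^s$ coefficient $N^{-s}$ only if both are at frequency $N$; but then $\widehat{\phi_N}$ would have total $H^s$-norm $\sim 1$ with \emph{two} bumps near $\pm N$, their product producing frequency $\sim 0$ with amplitude $(N^{-s})^2 \cdot N^2 \cdot N^{-2} = N^{-2s}$, whose $H^s$-norm (weight $\sim 1$ near the origin) is $N^{-2s}$, which \emph{blows up} precisely when $s < 0$. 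For $0 \le s < 1/2$ one needs the output at frequency $\sim M$ with $M$ a second free scale, $1 \ll M \ll N$: take $\widehat{\phi_N}$ with a bump at $N$, a bump at $M-N$, each normalized so the $H^s$-mass is split, giving output at $M$ with $H^s$-weight $M^s$ and amplitude forcing a power $M^{s} N^{-2s}\to\infty$ by sending $N\to\infty$ with $M$ fixed or $M = N^\theta$. I would optimize the exponents $(s, \theta)$ to get divergence exactly for $s < 1/2$, matching the threshold of Theorem \ref{localmeio}. The main obstacle, then, is not any single estimate but choosing the frequency geometry so that the parabolic damping is defeated (via the resonant $N^2 \cdot N^{-2}$ balance) while still producing an $H^s$-norm that diverges precisely below $s = 1/2$; once the right two-scale packet is fixed, the lower bound on $\|u_2\|_{H^s}$ contradicting $\|u_2\|_{H^s}\lesssim\|\phi_N\|_{H^s}^2\lesssim 1$ follows by direct computation, and then Theorem \ref{Illks} follows from this by the standard argument that $C^2$ well-posedness would yield exactly the bilinear estimate \eqref{ill2}.
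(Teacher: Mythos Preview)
Your overall strategy --- argue by contradiction, set $u=E_\mu(t)\phi$, $v=E_\mu(t)\psi$, derive
\[
\Big\|\int_0^t E_\mu(t-\tau)\,b\bigl(E_\mu(\tau)\phi,E_\mu(\tau)\psi\bigr)\,d\tau\Big\|_{H^s}\lesssim\|\phi\|_{H^s}\|\psi\|_{H^s},
\]
and violate it with frequency-localized data --- is exactly the paper's. The paper also uses the ``high $\times$ high $\to$ low'' configuration you eventually reach: $\hat\phi$ supported near $-N$, $\hat\psi$ supported near $+N$, so that $z\eta\sim -N^2$ while the output frequency $\xi$ sits in $[r,3r]\sim O(1)$; the time integral then contributes a factor $\sim N^{-2}$ which exactly cancels $|z\eta|\sim N^2$, defeating the parabolic damping just as you describe.

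Your careful bookkeeping, however, exposes a genuine issue that the paper glosses over. The paper takes $\hat\phi=r^{-1/2}N^{-(s-1/2)}\chi_{I_1}$ and asserts in \eqref{ill4} that $\|\phi\|_{H^s}\lesssim1$; but a direct computation gives $\|\phi\|_{H^s}^2\sim r^{-1}N^{1-2s}\cdot N^{2s}\cdot r=N$, so actually $\|\phi\|_{H^s}\sim N^{1/2}$. With the honest normalization $\hat\phi\sim N^{-s}\chi_{I_1}$ one gets $|f(\xi,t)|\sim N^{-2s}$ on a set of unit measure, hence $\|f\|_{H^s}\sim N^{-2s}$ --- exactly your count --- and the contradiction $N^{-2s}\lesssim1$ only fires for $s<0$, not $s<1/2$. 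So your hesitation is correct: unit-width bumps at $\pm N$ evaluated at fixed $t$ do \emph{not} reach the full range $s<1/2$, and the paper's exponent $N^{-2(s-1/2)}$ stems from the mis-normalized data.

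Your instinct to introduce a second scale is the right repair. Concretely: widen the bumps to width $L\sim N$ (so $I_1=[-2N,-N]$, $I_2=[N,2N]$, height $N^{-s-1/2}$) and evaluate at $t=t_N\sim N^{-2+\varepsilon}$. Then the output spreads over $|\xi|\lesssim t_N^{-1/2}\sim N^{1-}$, the $H^s$ weight contributes a genuine factor $\sim N^{s}$, and one finds $\|f(t_N)\|_{H^s}\sim N^{-2s}\,t_N^{-s/2-1/4}\to N^{1/2-s}$, which diverges precisely for $s<1/2$. Your proposal stops short of identifying this scaling (your two-parameter $M\ll N$ attempt with fixed $t$ still loses), but once this is in place the argument closes and Theorem~\ref{Illks} follows as you say.
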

\begin{proof}

The proof follows by a contradiction argument. Suppose that there exists a space $\Y$ as in theorem \ref{ill5}. Let $u(t)=E_\mu (t)\phi$ and $v(t)=E_\mu (t) \psi$ where $\phi, \psi \in H^s(\R)$ and $0<t<T$ is fixed. Using \eqref{ill1} and \eqref{ill2}

\begin{equation}\label{ill3}
\begin{split}
\Big\| \int_{0}^{t} E_\mu (t-\tau)b(u(\tau),v(\tau))d\tau\Big\|_{\Y}
                                                                     & \lesssim \|\phi\|_{H^s}\|\psi\|_{H^s}
\end{split}
\end{equation}
Now, we will construct functions $\phi$ and $\psi$ such that \eqref{ill3} fails.
Let $\phi$ and $\psi$ defined by
\begin{equation}
\phi=r^{-1/2}N^{-(s-1/2)}(\chi_{I_1})^{\vee} \ \mbox{and} \ \psi= r^{-1/2}N^{-(s-1/2)}(\chi_{I_2})^{\vee},
\end{equation}
where

$$I_1=[-N,-N+r], I_2=[N+r,N+2r], \ N>>1 \ \mbox{and} \ r\thicksim 1.$$
We observe that
\begin{equation}\label{ill4}
\|\phi\|_{\Y}\lesssim 1 \ \mbox{and} \ \|\psi\|_{\Y}\lesssim 1.
\end{equation}

Recalling that $$\Phi(\xi)=-\xi^2 +\frac{\mu}{(1+\xi^2)^{1/2}},$$ by taking the Fourier transform  and using Fubini's Theorem
\begin{equation}
\begin{split}
f(\xi,t)&:=\Big(\int_{0}^{t} E_\mu(t-\tau)b(u(\tau),v(\tau))d\tau\Big)^{\wedge}(\xi)\\
&=\int_{0}^{t} e^{(t-\tau)\Phi(\xi)}b(u(\tau),v(\tau))^{\wedge}(\xi)d\tau\\
&=\frac{1}{2}\int_{0}^{t} e^{(t-\tau)\Phi(\xi)} \underbrace{i\xi e^{\tau \Phi(\xi)}\hat{\phi}\ast i\xi e^{\tau \Phi(\xi)}\hat{\psi}}_{g(\xi,\tau)}d\tau,
\end{split}
\end{equation}
where, by the change of variables $z=\xi-\eta$ we obtain
\begin{equation}
\begin{split}
g(\xi,\tau)&:=-\int (\xi-\eta)e^{\tau \Phi(\xi-\eta)}\hat{\phi}(\xi-\eta)\eta e^{\tau\Phi(\eta)}\hat{\psi}(\eta)d\eta\\
         &=-\int ze^{\tau \Phi(z)}\hat{\phi}(z)\eta e^{\tau\Phi(\eta)}\hat{\psi}(\eta)d\eta.
\end{split}
\end{equation}

Then the integral above can be written
\begin{equation}
\begin{split}
f(\xi,t)&=-\frac{1}{2}\int_{0}^{t} e^{(t-\tau)\Phi(\xi)}\int z \eta e^{\tau \Phi(z)}\hat{\phi}(z)e^{\tau \Phi(\eta)}\hat{\psi}(\eta)d\eta d\tau\\
&=-\frac{e^{t\Phi(\xi)}}{2} \int z \eta \Big( \int_{0}^{t} e^{-\tau\Phi(\xi)} e^{\tau\Phi(z)}\hat{\phi}(z) e^{\tau\Phi(\eta)}\hat{\psi}(\eta) d\tau\Big) d\eta\\
&=-\frac{e^{t\Phi(\xi)}}{2r N^{2(s-1/2)}} \int_{K_\xi} z\eta \chi_{I_1}(z)\chi_{I_2}(\eta)(\int_{0}^{t} e^{\tau(-\Phi(\xi)+\Phi(z)+\Phi(\eta))}d\tau)d\eta\\
&=-\frac{e^{t\Phi(\xi)}}{2r N^{2(s-1/2)}} \int_{K_\xi} z\eta \left(\frac{e^{t(-\Phi(\xi)+\Phi(z)+\Phi(\eta))}-1}{-\Phi(\xi)+\Phi(z)+\Phi(\eta)} \right)d\eta,
\end{split}
\end{equation}
where $K_\xi=\{\eta\in \R: z\in I_1 \ \mbox{and} \ \eta\in I_2\}.$

We observe that if $\eta \in I_1$ and $z\in I_2$, then $|\eta|\thicksim |z|\thicksim N$, $r\leq \xi \leq 3r$ and $|\eta z|\thicksim N^2$.
Therefore, we obtain $|f(\xi,t)|^2 \gtrsim N^{-4(s-1/2)},$ so that  $$\x^{s}|f(\xi,t)|^2 \gtrsim \x^s N^{-4(s-1/2)}.$$ Thus
\begin{equation}
\begin{split}\label{ill5}
\Big\| \int_{0}^{t} E_\mu (t-\tau)b(u(\tau),v(\tau))d\tau\Big\|_{H^s}^2
&\gtrsim  N^{-4(s-1/2)}.
\end{split}
\end{equation}
Then from \eqref{ill2}, \eqref{ill4} and \eqref{ill5} follows that
$$N^{-2(s-1/2)}\lesssim 1, \ \ \forall N>>1,$$
which is a contradiction, taking account our hypothesis on $s$.


 \end{proof}

\begin{proof}[Proof of Theorem \ref{Illks}]
If the flow-map data solution would be $C^2$ at origin, by a computation of the Fr\'echet derivative we would obtain

\begin{equation*}
\Big\| \int_{0}^{t} E_\mu (t-\tau)b(u(\tau),v(\tau))d\tau\Big\|_{\Y}\lesssim  \|\phi\|_{H^s}\|\psi\|_{H^s}, \ \forall \phi,\psi \in H^s(\R).
\end{equation*}
But as we have seen in \eqref{ill3} the above inequality fails. This finish the proof.
\end{proof}


\bibliographystyle{mrl}

\end{document}